\useunder{\uline}{\ul}{}
\newenvironment{keywords}{
    \vspace{0.3cm} 
    \noindent\textbf{Keywords:} 
}{
    \vspace{0.3cm} 
}
\newcommand{\sep}{, }
\newtheorem{definition}{Definition} 
\newtheorem{theorem}{Theorem}
\newtheorem{remark}{Remark}
\newtheorem{proposition}{Proposition}
\newtheorem{lemma}{Lemma}
\title{iFCTN: an intra-block Fully-Connected Tensor Network Decomposition for Tensor Completion}
\author[1]{Ziyi Gan}
\author[1]{Chunfeng Cui\thanks{Corresponding author.}}
\affil[1]{School of Mathematical Sciences, Beihang University, Beijing, China \\
\vspace{0.1cm}
\texttt{ganziyi@buaa.edu.cn}, \quad \texttt{chunfengcui@buaa.edu.cn}}
\date{} 
\begin{document}
\pagenumbering{arabic}
\maketitle

\begin{abstract}
    The fully-connected tensor network (FCTN) decomposition has recently exhibited strong modeling capabilities by connecting every pair of tensor factors, thereby capturing rich cross-mode correlations. However, this advantage comes with an inherent limitation: updating the factors typically requires reconstructing auxiliary sub-networks, which entails extensive and cumbersome (un)folding. In this study, we propose the intra-block FCTN (iFCTN) decomposition, a novel (un)folding-free variant of FCTN decomposition designed to enhance computational efficiency. We parameterize each FCTN factor through Khatri-Rao products, which significantly reduces the complexity of reconstructing intermediate sub-networks and yields subproblems with well-structured coefficient matrices. Furthermore, we deploy the proposed iFCTN decomposition on the representative task of tensor completion and design an efficient proximal alternating minimization algorithm. Theoretically, we establish its global convergence to a critical point. Extensive experiments demonstrate that iFCTN outperforms state-of-the-art methods with a lower computational overhead.
\end{abstract}

\begin{keywords}
Tensor network decomposition \sep Khatri-Rao product \sep Proximal alternating minimization\sep Tensor completion
\end{keywords}

\section{Introduction}

    With rapid advances in science and technology, high-dimensional datasets (e.g., multispectral images \cite{MSI1,MSI2,MSI3,MSI4} and traffic-flow measurements \cite{traffic1,traffic2,traffic3}) have become pervasive in practice and are naturally represented as tensors. A tensor extends the concept of a matrix to higher orders, with entries indexed by multiple (continuous or discrete) variables. Tensor decompositions are powerful tools for modeling and processing such high-order data, and their applications are ubiquitous in machine learning \cite{ml3,ml2,tensorwang2024tensorized,ml1}, signal processing \cite{sg1,sg2}, and much more \cite{bai,muchmore2,muchmore1}. Conceptually akin to matrix factorization, tensor decomposition is the art of disassembling a multi-dimensional array into smaller factors 
    \cite{TensorDecompositions}. Over the past decades, several prominent models have been established, including Tucker decomposition \cite{tuckerHitchcock1927, tucker1966some,tuckerzhang2025noisy}, CANDECOMP/PARAFAC (CP) decomposition \cite{kolda2009tensor,cpkiers2000towards}, tensor singular value decomposition (t-SVD) \cite{kilmer2011factorization,kilmer2013third}, and tensor network (TN) decompositions\cite{TNs1,TNs2,TNs3}.

    TN decomposition factorizes an $N$-th order tensor into a set of low-order constituent tensors (referred to as cores), where tensor contractions characterize the algebraic interactions among these factors. This paradigm inherently facilitates exceptional data compression and high computational efficiency \cite{TNSbetter, TNSbetter2, TNSbetter3}. Among existing frameworks, the Tensor Train (TT) \cite{TT} and Tensor Ring (TR) \cite{TR} decompositions are characterized by chain and ring topologies, respectively. As a foundational milestone, TT decomposition represents an $N$-th order tensor as a sequence of multilinear products across a set of cores. Specifically, the terminal cores are constrained to matrices, while the intermediate $N-2$ cores are third-order tensors (as illustrated in Fig.~\ref{fig:TNs}). Subsequently, TR decomposition generalizes the TT structure by substituting the boundary matrices with third-order cores and imposing a cyclic contraction, thereby establishing a closed-loop topology. Moreover, recent studies \cite{qiu2024balanced} bridge tensor networks and traditional rank minimization by revealing intrinsic connections between TN ranks (e.g., TR rank) and balanced unfolding induced tensor nuclear norms. 

    \begin{figure}[t]
        \centering
        \includegraphics[width=0.9\textwidth]{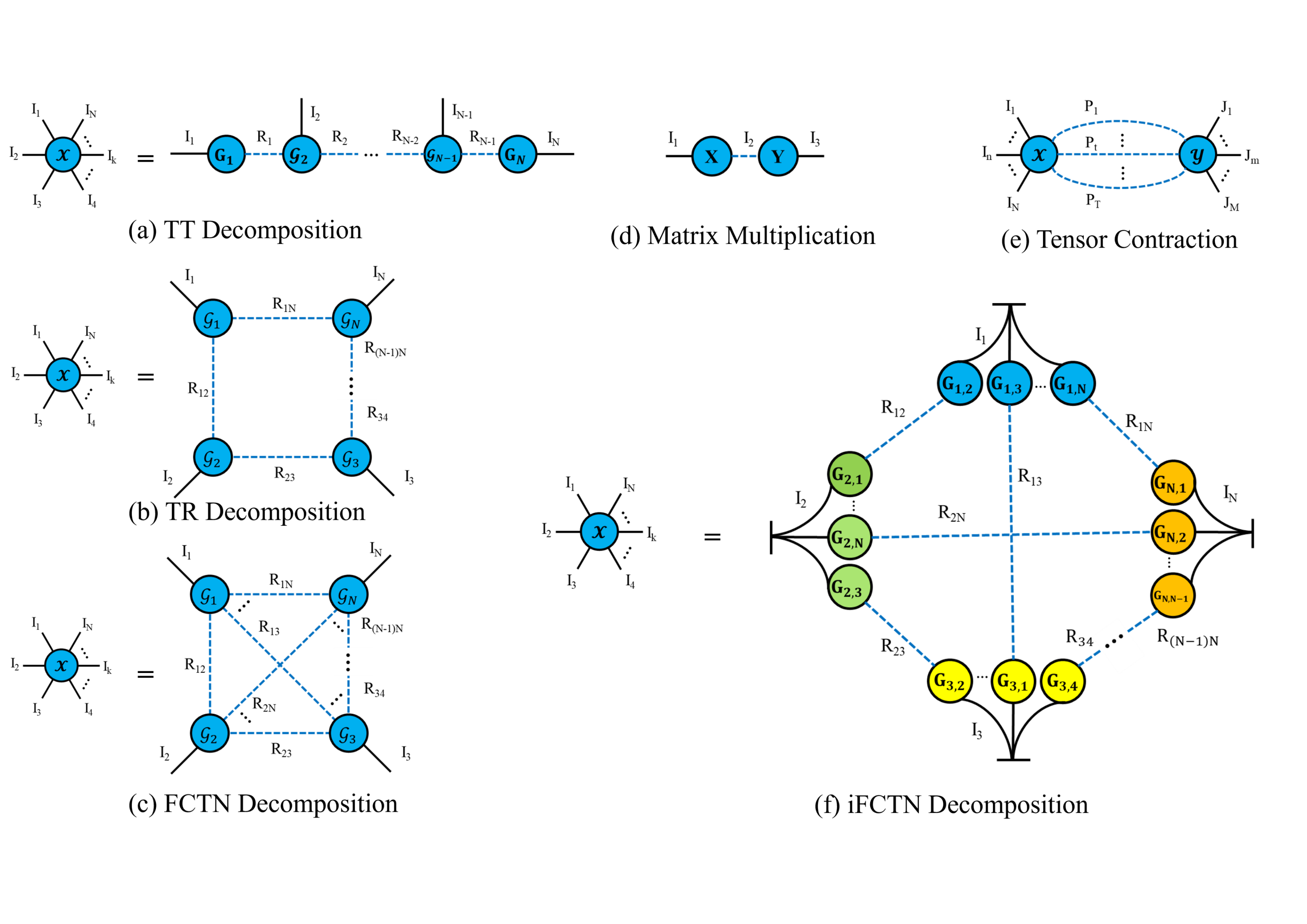}
        \caption{A graphical representation of TT, TR, FCTN and the proposed iFCTN decompositions. Here, the \textcolor{cyan}{blue dotted lines} denote matrix multiplications or tenor contractions, and the black horizontal line and curve resemble a ``cherry stem'' denote KR product. In contrast to the TT, TR, and FCTN, which employ tensor factors, iFCTN represents each tensor factor as a ``cherry chain'', whose components (``cherries'') are matrices.}
        \label{fig:TNs}
    \end{figure}
    
    Introduced in 2021, the fully connected tensor network (FCTN) decomposition \cite{FCTN} represents a significant advancement over TT and TR. Unlike its predecessors, which only establish connections between adjacent factors, FCTN decomposes an $N$-th order tensor into $N$ $N$-th order tensors, establishing a connection between \textit{every} pair of factors. This fully-connected topology grants FCTN superior capability to characterize the intrinsic correlations between any two modes of the tensor, making it highly effective for tensor completion (TC) \cite{FCTNrandom, FCTNROBUST, FCTNLRS, FCTNstrural}. Beyond data recovery, its potent representational ability has recently been extended to supervised machine learning paradigms, as evidenced by the development of the support fully-connected tensor networks machine and its associated safe screening rules for efficient classification \cite{wang2026support}. Subsequently, several variants of FCTN were also proposed, including SVDinsFCTN \cite{SVDinsFCTN}, N-FCTN \cite{NFCTN}, revealFCTN \cite{revealFCTN}, T-FCTN \cite{TFCTN}, NL-FCTN, and nonlocal patch-based FCTN \cite{FCTNNONLOCAL}.
    
    Although FCTN offers full connectivity, the high-order dimensionality of its core tensors inherently incurs prohibitive computational complexity and significant parameter redundancy \cite{FCTNSGD}.
    Specifically, updating FCTN factors typically requires reconstructing auxiliary sub-networks and repeatedly (un)folding intermediate tensors. Specifically, for an $N$th-order tensor, updating any single factor necessitates forming a dedicated sub-network from the remaining $N-1$ factors. Consequently, an $N$-way tensor entails $N$ such constructions per iteration, and these sub-networks must be rebuilt after every update. This reconstruction, with high computational cost and substantial memory overhead, slows practical training—particularly for large-scale, high-order tensors common in computer vision. 
     Addressing this computational burden while preserving FCTN's expressive power is a critical area of research \cite{FCTNDEEP}.

    \begin{figure}[t]
        \centering
        \includegraphics[scale=0.2]{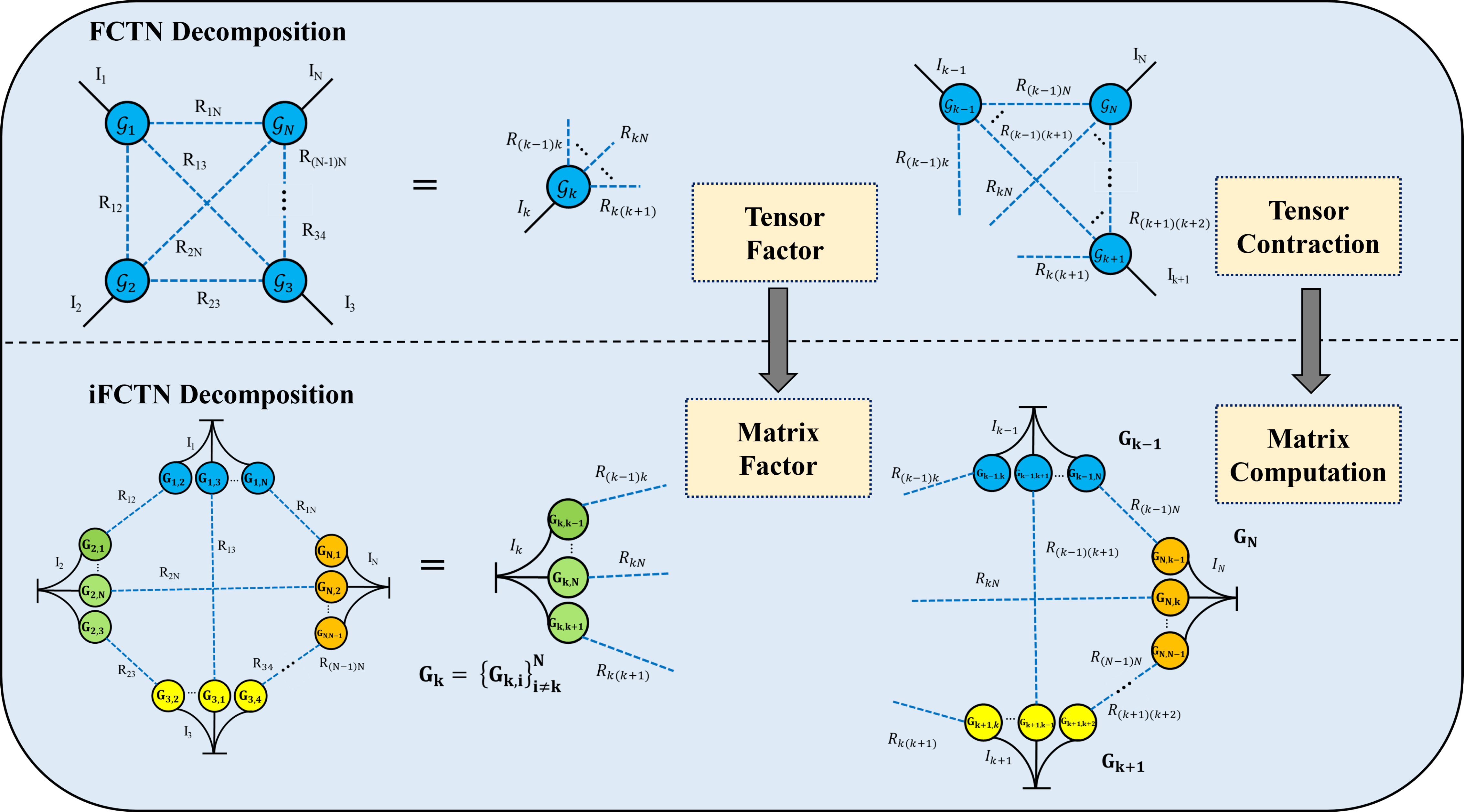}
        \caption{A comparative analysis of sub-network: FCTN versus the proposed iFCTN decomposition.}
        \label{fig:Subnet}
        \end{figure}

     To address these challenges, we revisit FCTN from an intra-block perspective and parameterize each tensor factor via Khatri–Rao (KR) products. This simple yet pivotal design preserves cross-mode expressivity while rendering the underlying linear algebra unfolding-free and amenable to efficient solvers. The main \textbf{\textit{contributions}} of this paper are summarized as follows: 
     
    \begin{itemize}
     \item We first propose a novel intra-block FCTN (iFCTN) framework, an (un)folding-free variant of FCTN that relies solely on matrix factors while capturing rich cross-mode correlations among the tensor factors (Section \ref{sec:iFCTN}). By decomposing the tensor factors into matrix factors (i.e., cherry tensor and cherry products) via the KR product, iFCTN admits an explicit expression for the sub-network (i.e. cherry sub-network).

     \item We then apply iFCTN to the TC problem (iFCTN-TC), directly leveraging the cherry-based parameterization for scalable optimization (Section \ref{sec:iFCTN-TC}). Our complexity analysis reveals that iFCTN effectively reduces both storage and computational complexities.
      Moreover, we establish convergence guarantees for the proposed proximal alternating minimization (PAM) solver.

    \item We assess the performance of iFCTN-TC on five benchmark datasets under eight random and fiber-wise missing scenarios. It consistently attains superior accuracy with lower computational cost, thereby validating its effectiveness and robustness (Section \ref{sec:Experiments}).
    \end{itemize}

\section{Preliminaries}
\subsection{Notations}
    In this paper, we denote vectors, matrices and tensors as $\bf{x}$, $\bf{X}$, $\mathcal{X}$, respectively. Given an $N$th-order tensor $\mathcal{X} \in \mathbb{R}^{I_{1} \times I_{2} \times \cdots \times I_{N}}$,  is denoted as $\mathcal{X}(i_1, i_2, \dots, i_N)$, where $i_n \in [I_n]$, $n \in [N]$, and $[N]$ represents the set $\{1, 2, \dots, N\}$. For index $(i_1, i_2, \dots, i_N)$, the notation $\overline{i_1 i_2 \cdots i_N} \stackrel{\text{def}}{=} 1 + \sum_{j=1}^{N}(i_j-1) \prod_{n=1}^{j-1} I_n$ is useful for unfolding matrices of a tensor. Its mode-$n$ unfolding is $\mathcal{X}_{(n)} \in \mathbb{R}^{I_{n} \times \prod_{k = 1,k\neq n}^{N}I_k}$. Here, $n\in [N]$ and  $[N] = \{1,\dots,N\}$.  
    Furthermore, given two matrices  $\mathbf{A} \in \mathbb{R}^{m \times r}$ and $\mathbf{B} \in \mathbb{R}^{n \times r}$, their Khatri-Rao product is   $\mathbf{C} = \mathbf{A} \odot \mathbf{B} \in \mathbb{R}^{mn \times r}$, where $c_{(i-1)n+j,l} = a_{il} b_{jl}$ for any $i\in[m]$, $j\in[n]$, and $l\in [r]$.    
    
\subsection{FCTN decomposition}   
    Given    $\mathcal{X} \in \mathbb{R}^{I_1 \times I_2 \times \cdots \times I_N}$, FCTN decomposes it into $N$ tensor factors $\mathcal{G}_k \in \mathbb{R}^{(\prod_{j<k} R_{j,k}) \times I_k \times (\prod_{j>k} R_{k,j})}$ for any $k\in[N]$. Specifically, the entry-wise form of the FCTN decomposition is given by:
        \begin{equation}\label{FCTN}
        \begin{aligned}
            & \mathcal{X}\left(i_1, i_2, \dots, i_N\right) =  \sum_{r_{1,2}=1}^{R_{1,2}} \sum_{r_{1,3}=1}^{R_{1,3}} \dots \sum_{r_{1, N}=1}^{R_{1, N}} \sum_{r_{2,3}=1}^{R_{2,3}} \dots \sum_{r_{2, N}=1}^{R_{2, N}} \dots \sum_{r_{N-1, N}=1}^{R_{N-1, N}} \left\{\mathcal{G}_1\left(i_1, r_{1,2}, r_{1,3}, \dots, r_{1, N}\right) \cdot \right. \\
            & \left.\mathcal{G}_2\left(r_{1,2}, i_2, r_{2,3}, \dots, r_{2, N}\right) \cdots \mathcal{G}_k\left(r_{1, k}, r_{2, k}, \dots, r_{k-1, k}, i_k, r_{k, k+1}, \dots, r_{k, N}\right) \cdots \right. \\
            & \left. \mathcal{G}_N\left(r_{1, N}, r_{2, N}, \dots, r_{N-1, N}, i_N\right)\right\}.
        \end{aligned}
        \end{equation}
     We denote the FCTN decomposition by \begin{equation*}
         \mathcal{X}= \operatorname{FCTN}\left(\left\{\mathcal{G}_k\right\}_{k=1}^N\right)=\operatorname{FCTN}\left(\mathcal{G}_1, \mathcal{G}_2, \dots, \mathcal{G}_N\right) 
     \end{equation*} 
     and call the vector collected by $\{R_{k_1, k_2}\}$ as the FCTN-ranks. 

    Recently, FCTN and its variants \cite{FCTNrandom, FCTNSGD} demonstrate high efficiency and stability in many applications. However, computing the FCTN decomposition,  demands either numerous summation loops or extensive large-scale tensor folding/unfolding to leverage matrix operations. To address these limitations, this paper introduces a relaxation technique based on the KR product.

\section{iFCTN Decomposition}\label{sec:iFCTN}
    Our proposed iFCTN is derived from FCTN, as depicted in Fig.~\ref{fig:TNs}~(d). The pivotal alteration lies in parameterizing each tensor factor as a sequence of matrices, thereby achieving a matrix-based realization of  FCTN. This reformulation enables all updates to be performed via matrix computations without tensor contractions. Furthermore, by leveraging  the KR product, we establish several structural results that underpin the proposed iFCTN.  
   
    We begin with the following definition, which decomposes a tensor into small matrices factors.
    
    \begin{definition}[\textbf{Mode-$k$ Cherry Tensor}]\label{def:cherrytensor}
    A tensor $\mathcal{G}\in \mathbb{R}^{R_{1:k-1}\times  I_{k}\times R_{k+1:N}}$ is referred to as a mode-$k$ KR-product tensor, or simply a mode-$k$ cherry tensor, if its $k$-th mode unfolding admits a KR factorization as follows,
    \begin{equation} \label{equ:KR-tensor}
    \left(\mathcal{G}_{\left(k\right)}\right)^{\top} = \textbf{M}_{N} \odot \cdots \odot \textbf{M}_{k+1} \odot \textbf{M}_{k-1} \cdots \odot \textbf{M}_{1},
    \end{equation} 
    where $\textbf{M}_{i}\in\mathbb{R}^{R_{i}\times I_{k}}$ are cherry factors of $\mathcal{G}$, denoted by $\mathcal{CT}$. For convenience, we also denote 
    \begin{equation*}
        \mathcal G = \mathcal{CT}_k\left(\{\textbf{M}_i\}_{i=1,i\neq k}^N\right).
    \end{equation*}
    \end{definition}
    
    We term $\mathcal G$ in \eqref{equ:KR-tensor} a cherry tensor, as its structure, depicted in Fig.~\ref{fig:TNs}, visually resembles ``a string of cherries'' hanging from their ``stems'': the KR product forms the ``stems'' that connect the smaller matrices (``cherries'').  This representation simultaneously preserves an explicit dependence on the underlying tensor structure. 

    
    In practice, there is no need to construct the cherry tensor \eqref{equ:KR-tensor}. Instead, all storage and computations are performed directly on the cherry factors $\{\mathbf{G}_{k,i}\}$.  
    For brevity, we hereafter denote the set of cherry factors at the $k$-th mode (``$k$-th string of cherries'') by
    \begin{equation}\label{equ:cherry_factor_set}
    \textbf{G}_{k} =\left\{\textbf{G}_{k,i}\right\}_{i\neq k}^{N}, \quad \textbf{G}_{k,i} \in \mathbb{R}^{ R_{k,i} \times I_{k}}.
    \end{equation}
    With Definition~\ref{def:cherrytensor}, the (un)folding operations are explicitly formulated in matrix form, which streamlines the construction of the tensor network. This entire process formally defines the iFCTN decomposition, as presented below.

    \begin{definition}[\textbf{iFCTN Decomposition}]
        The iFCTN decomposition decomposes  an $N$th-order tensor $\mathcal{X} \in \mathbb{R}^{I_1 \times I_2 \times \cdots \times I_N}$ into a set of cherry tensors in
        Definition~\ref{def:cherrytensor},
        which are composed of the corresponding cherry factors $\textbf{G}_{k}$ as in \eqref{equ:cherry_factor_set}. Suppose the iFCTN-rank matrix $\textbf{R}=(R_{i,j})\in\mathbb R^{N\times N}$  is symmetric.  Then, the element-wise form of the iFCTN decomposition can be expressed as follows,
        \begin{equation}\label{eq:iFCTN}
        \begin{aligned}
                 \mathcal{X}(i_{1},i_{2},\dots,i_{N})  = & \sum_{r_{1,2}=1}^{R_{1,2}} \dots \sum_{r_{1, N}=1}^{R_{1, N}} \sum_{r_{2,3}=1}^{R_{2,3}} \dots \sum_{r_{2, N}=1}^{R_{2, N}} \cdots \sum_{r_{N-1, N}=1}^{R_{N-1, N}} \left\{  \textbf{G}_{1,2} (r_{1,2},i_{1}) \cdot
                \textbf{G}_{1,3} (r_{1,3},i_{1}) \dots \right.\\
                & \left. \textbf{G}_{1,N}(r_{1,N},i_{1}) \cdot \textbf{G}_{2,1}(r_{1,2},i_{2}) \cdot \textbf{G}_{2,3}(r_{2,3},i_{2})\cdots \textbf{G}_{2,N}(r_{2,N},i_{2})\cdots \right. \\
                & \left. \textbf{G}_{N,1}(r_{1,N},i_{N}) \cdot \textbf{G}_{N,2}(r_{2,N},i_{N})\cdots \textbf{G}_{N,N-1}(r_{N-1,N},i_{N})\right\}.
        \end{aligned}
        \end{equation}
         We denote the iFCTN decomposition by 
         \begin{equation*}
             \mathcal{X}= \operatorname{iFCTN}\left(\left\{\textbf{G}_{k}\right\}_{k=1}^N\right)=\operatorname{iFCTN}\left(\textbf{G}_{1},\dots,\textbf{G}_{N}\right)
         \end{equation*}
         and call the vector collected by $\{R_{k_1, k_2}\}(k_1<k_2)$ as the iFCTN-ranks. 
    \end{definition}

    \begin{remark}
    When $N=3$, the iFCTN decomposition operates on a third-order tensor $\mathcal{X} \in \mathbb{R}^{I_1 \times I_2 \times I_3}$. The symmetric iFCTN-rank matrix $\textbf{R}$ involves three pairwise ranks: $R_{1,2}, R_{1,3},$ and $R_{2,3}$. The element-wise representation \eqref{eq:iFCTN} simplifies to:
    \begin{equation*}
    \begin{aligned}
        \mathcal{X}(i_1, i_2, i_3) = \sum_{r_{1,2}=1}^{R_{1,2}} \sum_{r_{1,3}=1}^{R_{1,3}} \sum_{r_{2,3}=1}^{R_{2,3}} & \left\{ \textbf{G}_{1,2}(r_{1,2}, i_1) \cdot \textbf{G}_{1,3}(r_{1,3}, i_1) \cdot \textbf{G}_{2,1}(r_{1,2}, i_2) \cdot \right.\\
        & \left. \textbf{G}_{2,3}(r_{2,3}, i_2) \cdot \textbf{G}_{3,1}(r_{1,3}, i_3) \cdot \textbf{G}_{3,2}(r_{2,3}, i_3) \right\}.    
    \end{aligned}
    \end{equation*}
    In this case, the tensor is formed by three cherry factors, denoted as $\mathcal{X} = \operatorname{iFCTN}(\textbf{G}_1, \textbf{G}_2, \textbf{G}_3)$. Each cherry factor $\textbf{G}_k$ captures the interactions with the other two modes (e.g., for $k=1$, $\textbf{G}_{1,2}$ and $\textbf{G}_{1,3}$ couple mode-1 with mode-2 and mode-3, respectively). The iFCTN-ranks are collected as the vector $[R_{1,2}, R_{1,3}, R_{2,3}]$.
    \end{remark}

    \begin{remark}
    In TN decompositions, such as TT, TR, FCTN, TW, every factor is a tensor of similar or lower order. Computing the $k$-th factor in a TN typically entails ``packing'' step that reformulates the remaining components into an effective sub-network. As illustrated in Fig.~\ref{fig:Subnet}, the FCTN rebuilds an auxiliary sub-network at every iteration with repeated contractions, permutations, and folding/unfolding steps between the remaining tensors. 
    
    By contrast, all factors in iFCTN are matrices (``cherries'') and ``packing'' the remaining cherries into a sub-network can be explicitly represented with these matrices.
    \end{remark}
    
    Building upon this matrix-centric representation, we now introduce the \textit{Cherry Product} to formalize the algebraic mechanism for 'packing' and interconnecting these sub-networks into higher-order structures.

    \begin{definition}[\textbf{Cherry Product}]
        Consider two cherry tensors $\mathcal{A}=\mathcal{CT}_k(\{\mathbf{A}_i\}_{i\neq k}) \in \mathbb{R}^{P_{1:k-1} \times I_{k} \times P_{k+1:N}}$ and   $\mathcal{B} =\mathcal{CT}_t(\{\mathbf{B}_j\}_{j\neq t})\in \mathbb{R}^{Q_{1:t-1} \times J_{t} \times Q_{t+1:M}}$. Let $\alpha \in [N] \setminus k$, $\beta \in [M]\setminus t$, $\textbf{n} = [N] \setminus \alpha$, and $\textbf{m} = [M]\setminus \beta$. Suppose that    $P_\alpha = Q_\beta$.  
        Then their cherry product yields an $(N+M-2)$ th order tensor as follows, 
        \begin{equation}
            \begin{aligned}
                & \mathcal{C}(i_{n_1},\dots,i_{n_{N-1}},j_{m_1},\dots,j_{m_{M-1}}) =  \prod_{i=1}^{N-1}\textbf{A}_{n_i}(i_{n_i},i_{n_{\tilde k}}) \prod_{j=1}^{M-1}\textbf{B}_{m_j}(j_{m_i},j_{m_{\tilde t}}) (\textbf{A}_{\alpha}^{\top}\textbf{B}_{\beta})_{i_{n_{\tilde k}},j_{m_{\tilde t}}}.
            \end{aligned}
        \end{equation}
    Here, $\tilde{k} = k$ if $\alpha > k$ and $k-1$ otherwise; similarly, $\tilde{t} = t$ if $\beta > t$ and $t-1$ otherwise.  
    \end{definition}

    \begin{remark}
        For simplicity, let   $M = N = 3$, and consider a mode-$2$ cherry tensor $\mathcal{A} \in \mathbb{R}^{P \times I \times R}$ and a  mode-$3$ cherry tensor $\mathcal{B} \in \mathbb{R}^{Q \times R \times J}$. Then their Cherry product  yields a tensor of size $\mathbb R^{P\times I\times Q\times J}$ written as follows,
        $$
        \mathcal{C}(p,i,q,j) = \textbf{A}_{1}(p,i) \cdot \textbf{B}_{1}(q,j) \cdot (\textbf{A}_{3}^{\top}\textbf{B}_{2})_{i,j},
        $$
        where  $\textbf{A}_{1} \in \mathbb{R}^{P \times I}$,  $\textbf{A}_{3} \in \mathbb{R}^{R \times I}$, $\textbf{B}_{1} \in \mathbb{R}^{Q\times J}$, and $\textbf{B}_{2} \in \mathbb{R}^{R \times J}$. 
        Or equivalently, we can matricize $\mathcal C$ into
        $\textbf{C} \in \mathbb{R}^{IJ \times R_1R_3}$ 
        {as follows,}
        $$
        \textbf{C} = \operatorname{diag}(\operatorname{vec}(\textbf{A}_3^{\top} \textbf{B}_{2})) \cdot (\textbf{B}_1 \otimes \textbf{A}_1)^{\top}.
        $$
    \end{remark}

    The cherry product operation can be visually analogized to a tile-matching game: when two ``cherry factors'' are connected, they undergo a matrix multiplication, and the corresponding rank information is absorbed. Through a sequence of such cherry products, we can ultimately construct a tensor that has the same dimension as the original target tensor. 

     \begin{proposition}[\textbf{Mode-$k$   Cherry Sub-network}]\label{propos:cherrysubnet}
        Suppose that $\mathcal{X} = \operatorname{iFCTN} \left(\left\{\textbf{G}_{t}\right\}_{t=1}^{N}\right)$. 
        Then, the cherry sub-network $\textbf{Z}_{k} \in \mathbb R^{\prod_{i \in [N] \setminus k} I_{i} \times \prod_{i \in [N] \setminus k} R_{k,i}}$  comprises two components: (i) the cherry factors $\{\mathbf{G}_{t,i}\}_{i\neq t}$ for all  $t\in[N]\setminus k$; (ii) the   cherry factors $\textbf{G}_{k}$. 
        Specifically, 
        \begin{equation}\label{equ:Zk}
            \textbf{Z}_{k} =  \mathrm{diag}\left(\mathrm{vec}\left(\mathcal{S}_{k}\right)
             \right) \times  \mathop{\otimes} \limits_{i \in \left\{N,\dots,1\right\} \setminus k} \textbf{G}_{i,k}^{\top},
        \end{equation}
            where $\mathcal{S}_{k}$ is an $(N-1)$ th-order tensor shown as follows,
        \begin{align*}
             \mathcal{S}_{k} = & \mathop{\circ} \limits_{i<j, i, j \in [N] \setminus k} \left\{\operatorname{reshape}\left(\mathbf{G}_{i,j}^{\top}\mathbf{G}_{j,i},[1,\dots,I_{i},\dots,I_{j},1,\dots,1]\right)\right\}.  
     \end{align*}
     Moreover, the mode-$k$ unfolding of $\mathcal{X}$ can be expressed by:  
        \begin{equation}
            \left(\mathcal{X}_{\left(k\right)}\right)^{\top} = \textbf{Z}_{k} \left(\mathcal{G}_{(k)}\right)^{\top},
        \end{equation}
        where $\mathcal{G}_{(k)}=\mathcal{CT}_k\left(\{{\bf G}_i\}_{i=1,i\neq k}^N\right).$
     \end{proposition}

     Proposition \ref{propos:cherrysubnet} demonstrates the profound algorithmic efficiency of the iFCTN framework. When solving the optimization sub-problems to update individual factors, evaluating the corresponding sub-network is a computational bottleneck. Unlike the original FCTN, which relies heavily on demanding high-order tensor contractions, iFCTN elegantly parameterizes its factors as matrices. Rather than complicating the model, this matrix-based formulation allows the cherry sub-network to be explicitly and directly constructed via standard matrix operations. Consequently, by casting the factor-updating sub-problems into a closed matrix form, the proposed method avoids complex high-order tensor manipulations, thereby providing a more straightforward and computationally friendly approach for the solving process.


    \begin{proposition}[\textbf{Uniqueness}]\label{propos:Uniqueness}
        Let $\mathcal{X}$ be an $N$-th order tensor admitting an iFCTN decomposition with rank $\mathbf{R}$. The cherry factors $\{M_{k,j}\}_{j \neq k}$ in iFCTN are essentially unique (up to permutation and scaling), provided that Kruskal's condition is satisfied. Specifically, for any mode $k$, if the sum of k-ranks of the cherry factors satisfies:
        $$\sum_{j \neq k} k_{\text{rank}}(M_{k,j}) \ge  I_k + N-1,$$ 
        then the factors $\{M_{k,j}\}$ are unique up to a common permutation matrix $\mathbf{\Pi}$ and scaling diagonal matrices $\{\mathbf{\Lambda}_j\}$ such that $\prod \mathbf{\Lambda}_j = \mathbf{I}$.
    \end{proposition}

     Proposition \ref{propos:Uniqueness} establishes a crucial theoretical property of the iFCTN decomposition: the strict identifiability of its latent factors. While broad classes of Tensor Network decompositions (including TT, TR, and the original FCTN) are typically hindered by gauge or rotational ambiguities, iFCTN successfully overcomes this limitation. It achieves essential uniqueness under mild Kruskal-type conditions, effectively mirroring the highly desirable uniqueness properties of the CP decomposition within a sophisticated TN framework. Consequently, the extracted cherry factors are guaranteed to be physically meaningful and robustly recoverable, facilitating interpretable feature extraction without relying on complex artificial regularizations.

         \begin{proposition}[\textbf{Transposition Invariance}]\label{propos:Transposition}
        Suppose that an $N$th-order tensor $\mathcal{X} \in \mathbb{R}^{I_1 \times \dots \times I_N}$ has the following iFCTN decomposition parameterized by the set of cherry factors $\mathbf{M} = \{\mathbf{M}_{k,j} \mid 1 \le k,j \le N, k \neq j\}$:
        $$\mathcal{X} = \text{iFCTN}(\{\mathbf{M}_{k,j}\}_{k \neq j}).$$
        Then, its vector $\mathbf{n}$-based generalized tensor transposition $\vec{\mathcal{X}}^{\mathbf{n}}$ can be expressed as:
        $$\vec{\mathcal{X}}^{\mathbf{n}} = \text{iFCTN}(\{\mathbf{M}^{\mathbf{n}}_{k,j}\}_{k \neq j}),$$
        where $\mathbf{n}=(n_{1},n_{2},\cdot\cdot\cdot,n_{N})$ is a reordering of the vector $(1,2,...,N)$, and the new factors are given by the mapping $\mathbf{M}^{\mathbf{n}}_{k,j} = \mathbf{M}_{n_k, n_j}$.
    \end{proposition}

    Proposition \ref{propos:Transposition} demonstrates that the iFCTN decomposition successfully inherits the generalized transposition invariance property from the original FCTN framework. This represents a fundamental advantage over standard sequential tensor networks such as the TT and TR decompositions. Specifically, the iFCTN formulation remains inherently invariant under any arbitrary permutation of the target tensor's modes. In contrast, the invariance of the TR decomposition is strictly limited to circular shifts or reverse permutations of the modes, while the TT decomposition is even more restricted, maintaining invariance only under reverse permutations.

\section{iFCTN Decomposition-Based TC Method}\label{sec:iFCTN-TC}

  In the following, the iFCTN decomposition method is employed to address the TC problem, with the aim of validating its rationality and superiority. 

\subsection{Model}

    Given an observation set $\Omega$ and an incomplete observation $\mathcal{O} \in \mathbb{R}^{I_{1} \times I_{2} \times \cdots \times I_{N}}$, we aim to recover a target tensor $\mathcal{X}$ that is low-rank within the iFCTN framework. Then, the proposed iFCTN-TC can be formulated as follows,
    \begin{equation}\label{eq:iFCTN-TC}
        \min _{\mathcal{X}, \left\{\textbf{G}_{k}\right\}_{k=1}^{N}} \frac{1}{2}\left\|\mathcal{X}-\operatorname{iFCTN}\left(\textbf{G}_{1},\dots, \textbf{G}_{N}\right)\right\|_{F}^{2}+\iota (\mathcal{X}). 
    \end{equation}
    Here,   $\iota (\mathcal{X})$  is an indicator function that enforces $\mathcal{X}(\mathbf i) = \mathcal{O}(\mathbf i)$ for any $\mathbf i \in \Omega$ defined by 
    \begin{equation}
        \iota (\mathcal{X}):=\begin{cases} 0, &\text { if }  \mathcal{P}_{\Omega}(\mathcal{X})=\mathcal{P}_{\Omega}(\mathcal{O}), \\ 
        \infty,&  \text { otherwise},\end{cases}
    \end{equation}
    where $\mathcal{P}_{\Omega} (\cdot)$ is a projection function. 
    For convenience, we also denote $\textbf{G} := \{\textbf{G}_{k}\}_{k=1}^{N}$, and define:
    \begin{align*}
        f(\textbf{G}, \mathcal{X}) &= \frac{1}{2}\left\|\mathcal{X}-\operatorname{iFCTN}(\textbf{G}_{1},\dots, \textbf{G}_{N})\right\|_{F}^{2}, \\
        F(\textbf{G}, \mathcal{X}) &= \frac{1}{2}\left\|\mathcal{X}-\operatorname{iFCTN}(\textbf{G}_{1},\dots, \textbf{G}_{N})\right\|_{F}^{2}+\iota (\mathcal{X}).
    \end{align*}
    
    Although the objective function $F(\textbf{G}, \mathcal{X})$ in \eqref{eq:iFCTN-TC} is nonconvex, it is convex with respect to each block when the remaining blocks are fixed. This motivates us to employ the PAM scheme to iteratively update the variables as follows. 

    \subsection{Algorithm}

    At the $s$-th iteration, we begin by sequentially updating the cherry factors for all $k \in [N]$ and $i \neq k$ as follows:
    \begin{align*}
        \textbf{G}_{k,i}^{(s+1)} = &\underset{\textbf{G}_{k,i}}{\operatorname{argmin}}\left\{f\left(\mathbf{G}_{<k,i}^{(s+1)},\mathbf{G}_{k,i},\mathbf{G}_{>k,i}^{(s)},\mathcal{X}^{(s)}\right)\right.  \left.+ \frac{\rho}{2}\left\|\textbf{G}_{k,i}-\textbf{G}_{k,i}^{(s)}\right\|_{F}^{2}\right\}.
    \end{align*}
   Then, we update target tensor $\mathcal X$ by:    
    \begin{equation}\label{Xupdate}
        \mathcal{X}^{(s+1)} = \underset{\mathcal{X}}{\operatorname{argmin}}\left\{F\left(\textbf{G}^{(s+1)}, \mathcal X\right)+\frac{\rho}{2}\left\|\mathcal{X}-\mathcal{X}^{(s)}\right\|_{F}^{2}\right\}.  
    \end{equation}
    Here, $\rho>0$ denotes the parameter controlling the proximal term, $\mathbf{G}_{<,k,i}^{(s+1)}
    := \big\{\mathbf{G}_{1:k-1}^{(s+1)},\,\mathbf{G}_{k,1:i-1}^{(s+1)}\big\},$ and $    \mathbf{G}_{>,k,i}^{(s)}
    := \big\{\mathbf{G}_{k,i+1:N}^{(s)},\,\mathbf{G}_{k+1:N}^{(s)}\big\}.$ 

\textit{    1) Update $\textbf{G}_{k,i}$ : }According to Proposition \ref{propos:cherrysubnet}, 
the $\mathbf{G}_{k,i}$ subproblem could be rewritten as follows,
    \begin{align}
         & \textbf{G}_{k,i}^{(s+1)} = \underset{\textbf{G}_{k,i}}{\operatorname{argmin}}\left\{ \frac{\rho}{2}\left\|\textbf{G}_{k,i}-\textbf{G}_{k,i}^{(s)}\right\|_{F}^{2} \right.\label{eq:subproblem} \left.+ \left\|\textbf{X}_{(k)}^{\top} -\textbf{Z}_{k}^{(s)}\left(\textbf{G}_{k,N}^{(s+1)}\odot\dots\odot\textbf{G}_{k,i}\odot\dots\odot\textbf{G}_{k,1}^{(s)}\right)\right\|_{F}^{2} \right\}, \nonumber
    \end{align} 
    where $\textbf{Z}_{k}^{(s)}$ is the $k$-th cherry subnetwork defined by \eqref{equ:Zk}. This is a linear least square over $\textbf{G}_{k,i}$, which admits a closed-form solution. Since each column of $\textbf{G}_{k,i}$ is separable in the subproblem, we therefore solve it column-wise to avoid constructing the large coefficient matrix explicitly. Specifically, for each $j\in[I_k]$, we could compute the $j$-th column of $\mathbf{G}_{k,i}$ as follows,
    \begin{equation}\label{eq:updateG}
        \begin{aligned}
        & \left(\textbf{G}_{k,i}^{(s+1)}\right)_{j} = \left(\left(\textbf{y}_{j}^{\top} \otimes \textbf{G}_{i,k}^{(s)} \right) \textbf{D}^{2} \left(\textbf{y}_{j} \otimes \textbf{G}_{i,k}^{(s)\top}\right)+ \rho \textbf{I} \right)^{-1}\mathbf{b}_{i,k,j},  
        \end{aligned}
    \end{equation}
    where $\textbf{D} = \operatorname{diag}(\operatorname{vec}(\mathcal{S_{k}}))$, $\textbf{y}_{j}$ is the $j$-th column of $\textbf{Y} = \mathop{{\bigodot}}\limits_{m \neq i} \left(\textbf{G}_{k,m}^{(s)\top}\textbf{G}_{m,k}^{(s)}\right)\in \mathbb{R}^{\left(\prod_{m\neq i,k}^{N} I_{m}\right)\times I_{k}}$, and $\mathbf{b}_{i,k,j}=\left(\textbf{y}_{j}^{\top} \otimes \textbf{G}_{i,k}^{(s)} \right) \textbf{D} \left(\textbf{X}_{k}^{\top}\right)_{j}+\rho \left(\textbf{G}_{k,i}^{(s)}\right)_{j}$. 
    

    \textit{2) Update $\mathcal{X}$:}  
    The loss function in \eqref{Xupdate} is completely separable across all elements of $\mathcal X$. Thus, for each index $\mathbf{i} \in \Omega$, the update is fixed as $\mathcal{X}^{(s+1)}(\mathbf{i}) = \mathcal{O}(\mathbf{i})$; for all other entries, it is updated by solving the corresponding least squares problem. This procedure yields the following closed-form solution:
    \begin{equation}\label{eq:updateX}
        \mathcal{X}^{(s+1)} = \mathcal{P}_{\Omega^{c}}\left(\frac{\operatorname{iFCTN}\left(\textbf{G}\right)+\rho \mathcal{X}^{(s)}}{1+\rho}\right)+\mathcal{P}_{\Omega}(\mathcal{O}). 
    \end{equation}
    Here, $\Omega^c$ denotes the complement of $\Omega$.

    The whole process of the PAM-based solver for iFCTN-TC is summarized in Algorithm \ref{alg:iFCTN-TC}. It is noteworthy that the column-wise update scheme formulated in Eq.~\ref{eq:updateG} exhibits a high degree of mathematical decoupling, making the proposed iFCTN intrinsically amenable to massive GPU acceleration. Specifically, the computation for the $j$-th column is strictly independent of any other columns, essentially decomposing the factor update into $I_k$ embarrassingly parallelizable linear sub-systems. Unlike traditional tensor network contractions that typically suffer from high-latency, non-coalesced memory accesses due to recursive un(folding) and permutation operations, Eq.~\ref{eq:updateG} exclusively relies on well-structured dense matrix operations. 

    \begin{algorithm}[h]
    \caption{A PAM-based solver for iFCTN-TC.}  
    \label{alg:iFCTN-TC}      
    \textbf{Input:}  The observed tensor $\mathcal{O}\in\mathbb{R}^{I_{1}\times I_{2}\times\cdots\times I_{N}}$, the sample set $\Omega$, the iFCTN-ranks $\textbf{R}\in\mathbb R^{N\times N}$, 
    $\rho$, $t_{\max}$, and $\epsilon$.
   
    \textbf{Initialization:} Set $s=0$, $\mathcal{X}^{(0)}=\mathcal{O}$, and  $\textbf{G}_{k,i}^{(0)}\in\mathbb{R}^{R_{k,i}\times I_{k}}$.
    \begin{algorithmic}[1]
        \While{not converged and $s<t_{\max}$}
           \For{$k\in[N]$, $i\in [N]\setminus k$, and $j\in[I_k]$,}
            \State Update $\left(\textbf{G}_{k,i}^{(s+1)}\right)_{j}$ via (\ref{eq:updateG}).
            \EndFor
            \State Update $\mathcal{X}^{(s+1)}$ via (\ref{eq:updateX}).
            \State Check the convergence condition:
            \begin{align}
                \left\|\mathcal{X}^{(s+1)}-\mathcal{X}^{(s)}\right\|_{F}/\left\|\mathcal{X}^{(s)}\right\|_{F} < \epsilon. \nonumber
            \end{align}
            \State $s \leftarrow s+1 $
        \EndWhile
    \end{algorithmic}
    \textbf{Output:} The recovered tensor $\mathcal{X}$
    \end{algorithm}


\begin{table*}[h]
    \centering
    \caption{Storage and computational complexity comparison between FCTN and iFCTN (ours) for 3rd and $N$-th order tensors. Here, we assume that $I=I_1=\dots =I_N$ and $R=R_{ij}$ for any $i,j\in[N]$.}
    \label{tab:complexity}
    \begin{tabular*}{\textwidth}{@{\extracolsep{\fill}}cccc}
        \toprule
        Order & Method  & Storage complexity & Computation complexity \\
        \midrule
        \multirow{2}{*}{3-D} & FCTN & $\mathcal{O}(IR^{2})$ & $\mathcal{O}(I^{3}R^{2})$\\
                             & iFCTN & $\mathcal{O}(IR)$ & $\mathcal{O}(I^3)$ \\
        \midrule
        \multirow{2}{*}{$N$-D} & FCTN & $\mathcal{O}\left(N IR^{N-1}\right)$ & $\mathcal{O}\left(N \sum_{k=2}^{N} I^{k} R^{k(N-k)+k-1}+\right.$ $\left.N I^{N-1} R^{2(N-1)}+N R^{3(N-1)}\right)$ \\
                               & iFCTN & $\mathcal{O}\left(N(N-1)IR\right)$ & $\mathcal{O}(N I^{N} + N^3 I^{N-1} + N^3 I^{2}R + N IR^{3})$ \\
        \bottomrule
    \end{tabular*}
\end{table*}

\subsection{Complexity analysis}

    In this subsection, we assume that
    $\mathcal{X}\in\mathbb{R}^{I\times I\times\cdots\times I}$ and $R_{k_1,k_2} = R_{k_2,k_1} = R$ for any $k_1,k_2\in[N]$.
    We present a comparison between iFCTN and FCTN \cite{FCTN} in  Table~\ref{tab:complexity}.

    \textbf{Storage complexity.} In iFCTN, the $N(N-1)$ ``cherries'' each incur $\mathcal{O}(IR)$ memory, so the total storage scales as $\mathcal{O}\!\left(N(N-1)IR\right)$.
     
    \textbf{Computational complexity.} 
    (i) For each $k$ and $i$, updating $G_{k,i}$ $\left(i \neq k\right)$ in (\ref{eq:updateG}) involves the construction and solution of subproblems, which cost $\mathcal{O}(N^{2}(I^{2}R + I^{N-1}))$ and $\mathcal{O}(I^{N} + I^{2}R + IR^{3})$. 
    (ii) Updating $\mathcal{X}$ from (\ref{eq:updateX}) requires $\mathcal{O}(I^{N})$. 
    Therefore, the whole computational complexity at each iteration in  Algorithm~\ref{alg:iFCTN-TC} is $\mathcal{O}(N I^{N} + N^3 I^{N-1} + N^3 I^{2}R + N IR^{3})$. In practice, the problem always operates in the regime $I \gg R > N$.

\subsection{Convergence analysis}
    
    Under mild conditions, we aim to show that the sequence generated by Algorithm \ref{alg:iFCTN-TC} globally converges to a critical point. According to the finite length property \cite{attouch2013convergence}, establishing this global convergence requires the sequence to satisfy three key conditions: the Kurdyka-Łojasiewicz (KŁ) property, a sufficient decrease condition, and a relative error condition. We first formally establish these fundamental lemmas. The detailed proofs of these lemmas are provided in the appendix.

    \begin{lemma}\label{lemma:KL}
        The objective function $F(\mathcal{M})$ in problem (\ref{eq:iFCTN-TC}) is a Kurdyka-Łojasiewicz (KŁ) function.
    \end{lemma}

    Having confirmed the KŁ property of the objective function, we next characterize the dynamic behavior of the algorithmic sequence. 

    \begin{lemma}[Sufficient decrease]\label{lemma:Sufficient}
        Let $\left\{\mathcal{M}^{(s)}\right\}_{s \in \mathbb{N}}$ be a  sequence generated by Algorithm \ref{alg:iFCTN-TC}. Then, the sequence explicitly satisfies
        $$
        \begin{aligned}
            F(\mathcal{M}^{(s)}) - F(\mathcal{M}^{(s+1)}) \geq \frac{\rho}{2} \left\|\mathcal{M}^{(s+1)} - \mathcal{M}^{(s)} \right\|_{F}^{2},
        \end{aligned}
        $$
        where 
        $\left\|\mathcal{M}^{(s+1)}-\mathcal{M}^{(s)}\right\|_{F}^{2}= \left\|\mathcal{X}^{(s+1)} - \mathcal{X}^{(s)}\right\|_{F}^{2}$  $+\sum_{k=1}^{N}\sum_{i \neq k}^{N} \left\|\textbf{G}_{k,i}^{(s+1)}-\textbf{G}_{k,i}^{(s)}\right\|_{F}^{2}$.
    \end{lemma}

   While Lemma \ref{lemma:Sufficient} guarantees a sufficient reduction in the objective value, the KŁ framework also requires a connection between the sequence's step size and the stationarity of the objective. 
    
    \begin{lemma}[Relative Error Condition]
        Let $\left\{\mathcal{M}^{(s)}\right\}$ be a bounded sequence generated by Algorithm \ref{alg:iFCTN-TC}. Then, there exists $\mathcal P^{(s+1)}\in \partial F(\mathcal{M}^{(s+1)})$ such that
        $$\left\|\mathcal P^{(s+1)}\right\|_{F} \leq L_{F} \left\|\mathcal{M}^{(s+1)} - \mathcal{M}^{(s)}\right\|_{F},$$
        where $L_{F} = N(N-1)\rho + \sum_{k=1}^{N} \sum_{i \neq k}^{N} L_{k,i}$. 
        Here, $L_{k,i}$ denotes the gradient's Lipschitz constant of $F(G, \mathcal{X})$ with respect to $G_{k,i}$.
    \end{lemma}
    
    With the above three lemmas established, we are now ready to present the main global convergence theorem for our algorithm.  
    
    \begin{theorem}\label{Theorem}
        Suppose that the sequence $\left\{\left(\textbf{G}^{(s)}\right)\right\}$ obtained by the Algorithm \ref{alg:iFCTN-TC} is bounded, then it globally converges to a critical point of $F(\textbf{G},\mathcal{X})$.
    \end{theorem}

    \begin{proof}
         According to the finite length property \cite{attouch2013convergence}, the bounded sequence $\left\{\mathcal{M}^{(s)}\right\}_{s \in \mathbb{N}}$ could converge to the critical point when the following three lemmas are satisfied.
    \end{proof}

         \begin{figure}[ht]
        \centering
        \includegraphics[width=\textwidth]{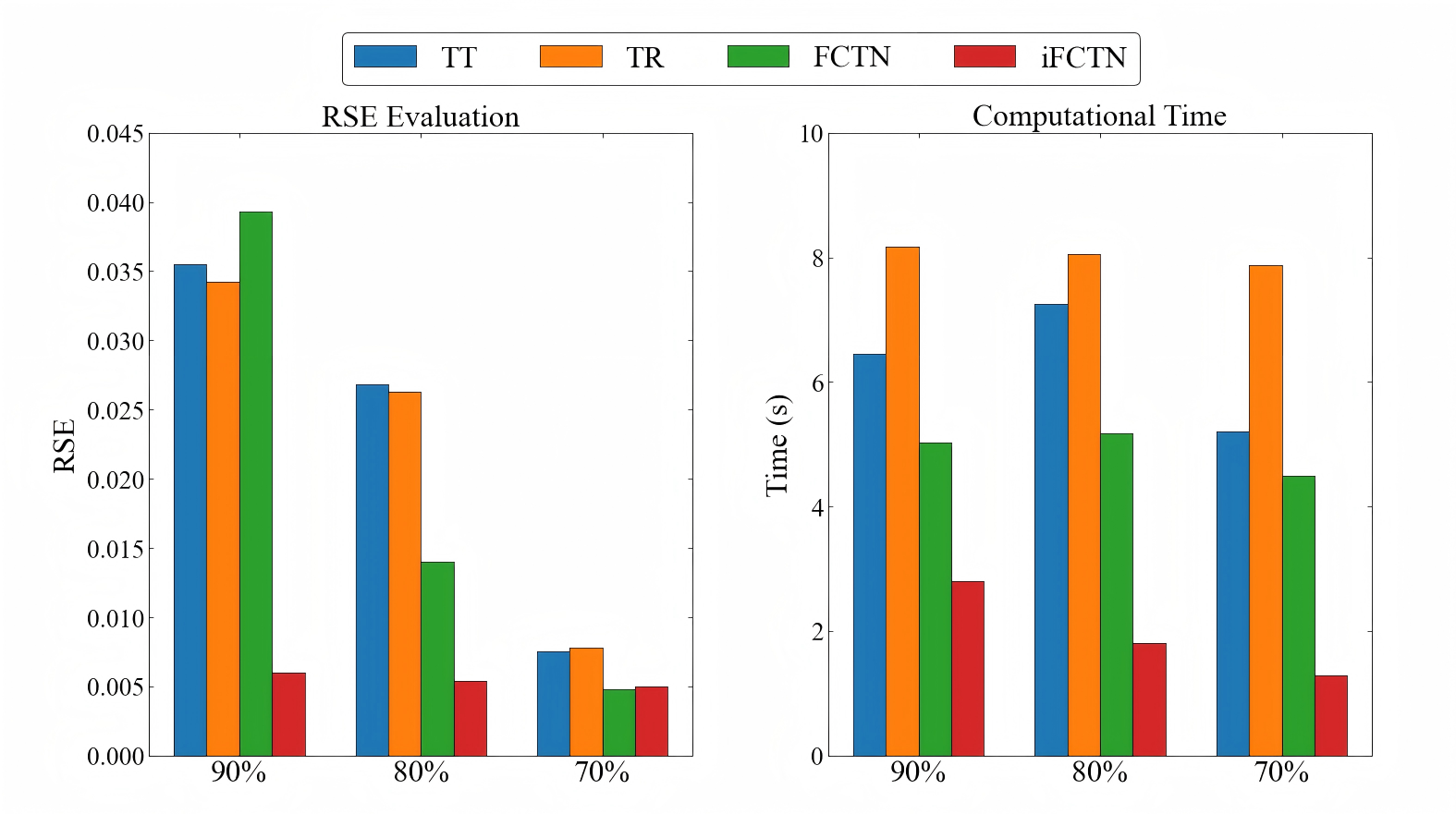}
        \caption{Results of RSE and runtime on the synthetic dataset with different missing rates.}
        \label{fig:synthetic}
    \end{figure}

      \begin{figure}[ht]
        \centering
        \includegraphics[scale = 0.17]{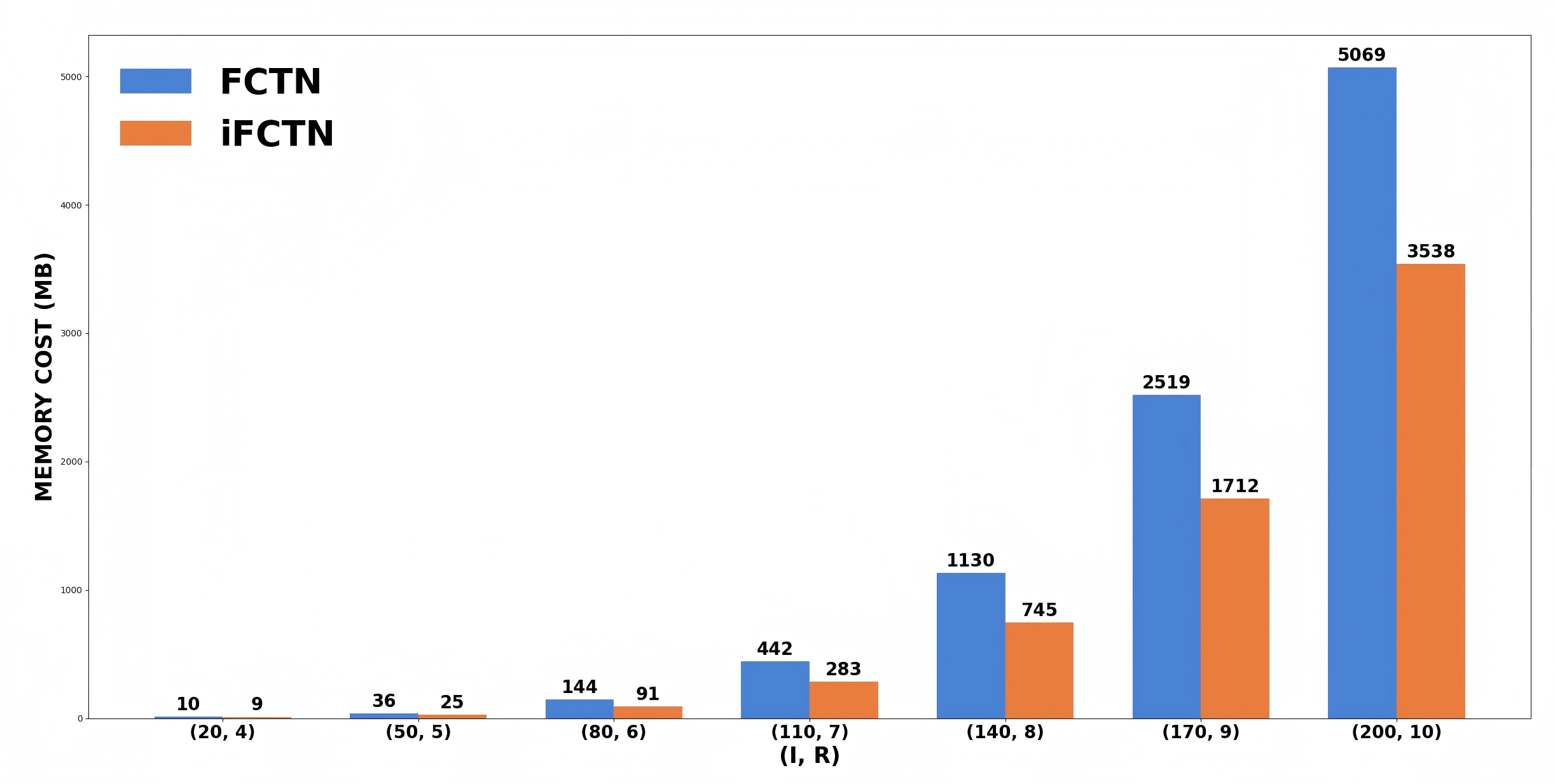}
        \caption{Memory cost of the FCTN and iFCTN under the fourth-order tensor with various sizes $I \times I \times I \times I$ and their ranks $(R, R, R, R, R, R)$.}
        \label{fig:memory}
    \end{figure}

            \begin{table}[ht]
    \centering
    \caption{Memory cost (MB) of methods under various sizes $I$ and the fixed rank R = 5 for the fourth-order tensor}
    \label{tab:synthetic1}
    \begin{tabular}{llllll}
    \toprule
    method & \multicolumn{5}{c}{$I$} \\
    \cmidrule(lr){2-6}
          & 20    & 40    & 60    & 80    & 100        \\
    \midrule
    FCTN  & 12.8  & 27.0  & 50.3  & 83.1  & 123.5 \\
    iFCTN  & 10.9  & 19.0  & 32.6  & 52.5   & 76.5\\
    \bottomrule
    \end{tabular}
    \end{table}

    \begin{table}[ht]
    \centering
    \caption{Memory cost (MB) of methods under various ranks $R$ and the fixed size $I = 40$ for the fourth-order tensor}
    \label{tab:synthetic2}
    \begin{tabular}{llllll}
    \toprule
    method & \multicolumn{5}{c}{$R$} \\
    \cmidrule(lr){2-6}
          & 5     & 10     & 15      & 20      & 25        \\
    \midrule
    FCTN  & 27.0  & 217.4  & 940.1  & 2844.7  & 6955.4 \\
    iFCTN  & 19.0  & 151.3  & 688.9  & 2108.6  & 5063.4\\
    \bottomrule
    \end{tabular}
    \end{table}

\section{Experiments}\label{sec:Experiments}

   Extensive numerical experiments were conducted to evaluate the effectiveness of the proposed iFCTN-TC method with different random-missing (RM) and fiber-missing (FM) levels. We compared iFCTN against eight state-of-the-art baselines: HaLRTC \cite{HaLRTC}, TNN \cite{TNN}, TMac \cite{TMac}, TFTC \cite{TFTC}, SNN~\cite{HaLRTC}, Tucker \cite{tucker1966some}, TT \cite{TT}, and FCTN \cite{FCTN}. The algorithm parameters were set as follows: the proximal parameter $\rho = 0.1$, maximum iteration $t_{\max}= 1000$, and stopping criteria $\epsilon = 10^{-5}$. To minimize the influence of the numerical optimizer, the TMac, TFTC, Tucker, TT, and FCTN models were also solved using the PAM scheme. For HaLRTC, the weight vector $\alpha$ was fixed to $(\frac{1}{N}, \dots, \frac{1}{N})$. The hyperparameters for those methods were tuned over prescribed ranges and selected manually for best performance. All datasets were pre-normalized to $[0,1]$.

   For evaluation, we report mean peak signal-to-noise ratio (MPSNR), mean structural similarity (MSSIM) \cite{PSNRSSIM}, relative error (RSE), i.e. $\text{RSE} = \left\|\mathcal{X}_{ture} - \mathcal{X}\right\|_{F}/\left\|\mathcal{X}\right\|_{F}$, the root mean square error (RMSE), i.e. $\text{RMSE} = \sqrt{\frac{1}{T}\left\| \mathcal{X}_{true} -\mathcal{X} \right\|_{F}^{2}}$, and total computational time (includes the time required to tune all parameters) are used to evaluate model performance. 
   
   All simulations are implemented in Pytorch (v2.0.0) and executed on a Windows 11 laptop with an Intel Core i7-12700H CPU @ 2.30 GHz, 32GB of RAM, and an NVIDIA GeForce RTX 3070 Laptop GPU.

\subsection{Synthetic Data Experiments}
    This section mainly aims to verify the superiority of the
    proposed iFCTN decomposition over the TT, TR and FCTN decompositions by contrasting the performance of their corresponding TC methods, i.e., iFCTN-TC, TT-TC, TR-TC and FCTN-TC. All methods are solved by PAM to get rid of the influence of the algorithm. We generate synthetic tensors by the tucker decomposition sampled from uniform distribution $U(0,1)$ The testing data includes a fourth-order tensor of size $20 \times 20 \times 20 \times 20$, whose Tucker rank is $(5,5,5,5)$. 
    
    As illustrated in Fig.~\ref{fig:synthetic}, while the RSE of TT, TR, and FCTN degrades significantly under extreme data missing rates (e.g., 80\% and 90\%), the proposed iFCTN remains remarkably stable and maintains a high level of precision. Notably, compared to its predecessor FCTN, iFCTN not only prevents the severe performance drop at the 90\% missing rate but also maintains comparable accuracy at lower missing rates. Furthermore, in terms of computational efficiency, iFCTN shows a dramatic advantage. It requires the least amount of time to execute (under 3 seconds), effectively cutting the time required by FCTN in half, whereas TR and TT take significantly longer.

    To further evaluate the storage efficiency, we compare the memory consumption of FCTN and iFCTN under different tensor sizes $I$ and ranks $R$ in Fig.~\ref{fig:memory}. As shown in the bar chart, iFCTN consistently requires less memory than the original FCTN across all tested configurations. More importantly, as the scale of $(I, R)$ increases, the memory-saving advantage of iFCTN becomes increasingly prominent. For instance, when $(I, R) = (200, 10)$, iFCTN reduces the memory cost from 5069 MB to 3538 MB, saving approximately 30\% of storage space.

    Table~\ref{tab:synthetic1} ($R=5$) shows that while memory consumption scales with dimension $I$, iFCTN maintains a significant advantage, reducing overhead by approximately 38\% at $I=100$ (76.5 MB vs. 123.5 MB). Furthermore, Table~\ref{tab:synthetic2} ($I=40$) reveals that memory cost is highly sensitive to the rank $R$. Specifically, in the high-rank case ($R=25$), iFCTN achieves a substantial saving of nearly 1.9 GB compared to FCTN (5063.4 MB vs. 6955.4 MB), demonstrating its robustness in memory-constrained environments.

          \begin{figure}[t]
        \centering
        \includegraphics[width=\textwidth]{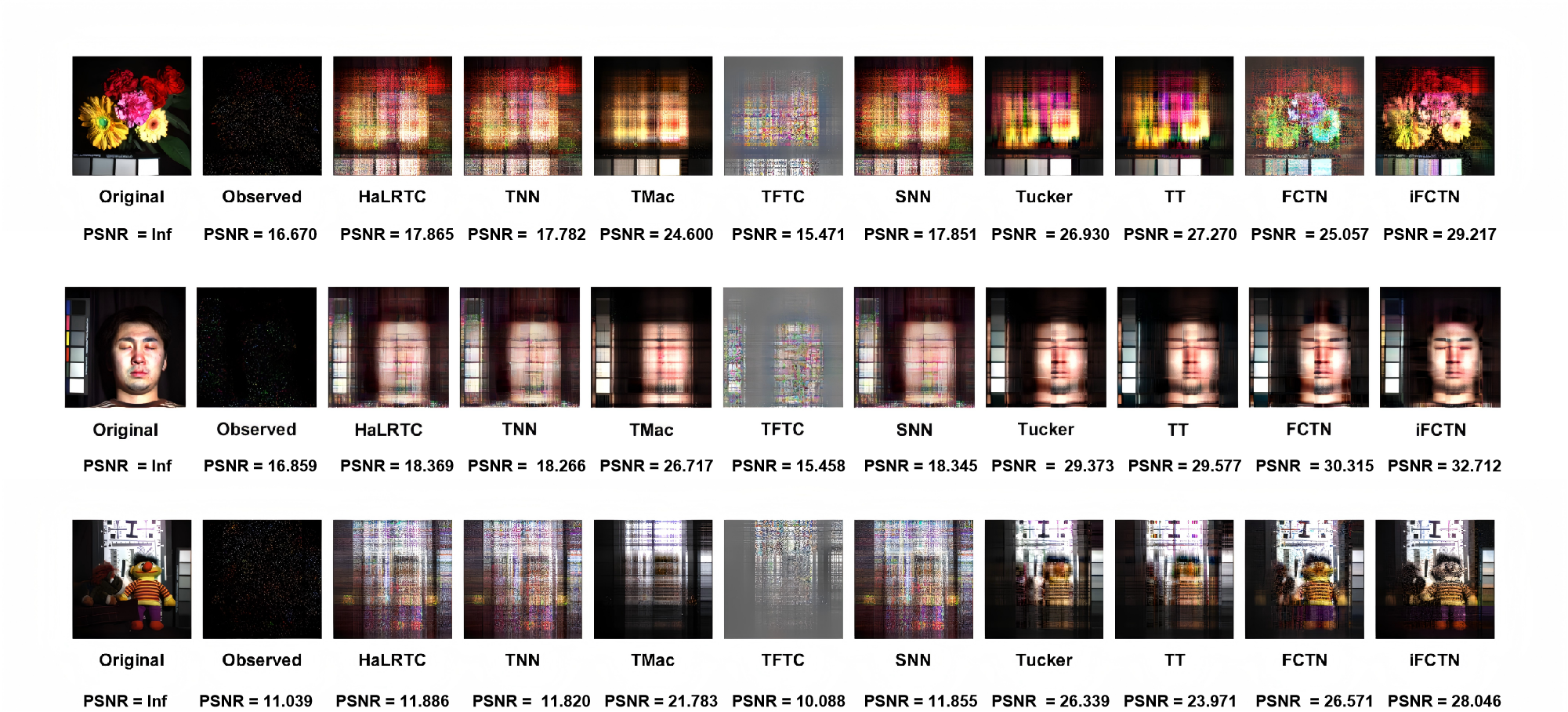}
        \caption{The pseudo-color renderings of reconstructions (composed of the 29th, 19th, and 9th bands) and corresponding PSNRs for MSI datasets. The first row is 
        \textit{flower} with RM = $90\%$, the second row is \textit{face} with RM = $80\%$, the third row is \textit{toy} with RM = $99\%$.}
        \label{fig:MSI_PSNR}
    \end{figure}

        \begin{table}[h]
\centering
\setlength{\tabcolsep}{3pt}
\renewcommand{\arraystretch}{1.15}
\footnotesize
\caption{Results of MPSNR, MSSIM, and runtime over 5 independent runs for  \textit{face and flower} under different fiber missing ratios (FR). The best results are highlighted in {\bf bold}; the second-best are \underline{underlined}.}
\label{tab:MSI_FM}
\resizebox{\textwidth}{!}{
\begin{tabular}{l l cc c cc c cc c cc c}
\toprule
\multirow{2}{*}{Dataset} & \multirow{2}{*}{method} &
\multicolumn{3}{c}{$80\%$} & \multicolumn{3}{c}{$70\%$} & \multicolumn{3}{c}{$60\%$} & \multicolumn{3}{c}{$50\%$} \\
\cmidrule(lr){3-5}\cmidrule(lr){6-8}\cmidrule(lr){9-11}\cmidrule(lr){12-14}
& & MPSNR & MSSIM & time (s) & MPSNR & MSSIM & time (s) & MPSNR & MSSIM & time (s) & MPSNR & MSSIM & time (s) \\
\midrule
\multirow{10}{*}{\textit{flower}}
& Observed & 17.439 & 0.484 & $-$ & 18.023 & 0.540 & $-$ & 18.690 & 0.589 & $-$ & 19.475 & 0.638 & $-$ \\
& HaLRTC   & 19.828 & 0.637 & 3.574 & 21.775 & 0.707 & 3.752 
 & 24.036 & 0.773 & 3.725 & 26.934 & 0.843 &3.769  \\
& TNN      & 19.681 & 0.623 & 338.992
 & 21.376 & 0.687 & 338.945 & 23.503 & 0.751 & 341.602  & 26.289 & 0.821 & 322.832  \\
& TMac    & 24.413 & 0.640 & 10.375 
 & 26.106 & 0.682 & 9.118  & 27.006 & 0.735 & 9.665  & 30.294 & 0.770 & 11.131 \\
& TFTC     & 25.811 & 0.625 & 6.433 & 26.598 & 0.671 & 3.482  & 27.451 & 0.705 & 2.403  & 28.335 & 0.748 & 1.423 \\
& SNN      & 20.043 & 0.628 & 39.284 
 & 21.928 & 0.697 & 	35.493  & 24.333 & 0.766 & 	38.266 & 27.513 & 0.840 &  	37.207 \\
& Tucker   & 26.116 & 0.648 & 3.612 & 26.858 & 0.691 & 	2.422 	 & 27.605 & 0.729 & 2.379 & 28.428 & 0.765 & 1.761 \\
& TT       & \uline{26.514} & \uline{0.661} & 46.972
 & \uline{27.359} & 0.707 & 25.432  & 28.165 & 0.746 & 18.461 	 & 29.005 & 0.781 & 11.250  \\
& FCTN     & 25.765 & 0.623 & 12.689 & 26.736 & \uline{0.708} & 18.487  & \uline{29.195} & \uline{0.749} & 10.936 & \uline{30.071} & \uline{0.776} & 13.245 \\
& iFCTN     & \textbf{27.248} & \textbf{0.703} &  7.518 
 & \textbf{28.987} & \textbf{0.758} & 3.740  & \textbf{30.243} & \textbf{0.803} & 2.669  & \textbf{31.196} & \textbf{0.836} & 	1.061  \\
\midrule
\multirow{10}{*}{\textit{face}}
& Observed & 17.628 & 0.414 & $-$ & 18.176 & 0.472 & $-$ & 18.902 & 0.526 & $-$ & 19.682 & 0.578 & $-$ \\
& HaLRTC   & 20.380 & 0.659 & 3.971 
 & 22.483 & 0.721 & 	3.858  & 25.451 & 0.789 & 3.924 & 29.222 & 0.856 & 3.954  \\
& TNN      & 20.158 & 0.653 & 346.060 & 22.105 & 0.712 &  	344.070 & 24.807 & 0.776 & 	333.544 & 28.143 & 0.838 & 313.699 \\
& TMac     & 19.284 & 0.526 & 18.603 & 22.094 & 0.656 & 15.272 & 25.459 & 0.740 & 11.887 & 29.734 & 0.826 & 17.876  \\
& TFTC     & \uline{29.882} & 0.778 & 30.023 & 31.812 & 0.819 & 19.671 & 31.828 & 0.845 & 8.550  & 32.640 & 0.873 & 3.764 \\
& SNN      & 17.960 & 0.507 & 42.769 & 22.754 & 0.726 & 43.725 & 25.878 & 0.796 & 41.526  & 29.899 & 0.865 & 40.294 \\
& Tucker   & 29.573 & 0.778 & 3.218 & 30.385 & 0.814 & 1.782 & 31.195 & 0.843 & 1.115 & 32.105 & 0.868 & 0.888 \\
& TT       & 29.872 & \uline{0.786} & 53.773 & 30.779 & 0.824 & 18.967 & 31.566 & 0.852 & 9.695  & 32.316 & 0.877 & 6.748  \\
& FCTN      & 28.465 & 0.782 & 10.148 & \uline{31.031} & \uline{0.814} & 10.368 & \uline{31.282} & \uline{0.837} & 6.683 & \uline{32.876} & \uline{0.873} & 7.424 \\
& iFCTN     & \textbf{31.243} & \textbf{0.832} & 7.948
 & \textbf{33.121} & \textbf{0.881} & 4.023 & \textbf{34.316} & \textbf{0.904} & 1.819  & \textbf{35.124} & \textbf{0.922} & 1.085 \\
\midrule
\multirow{10}{*}{\textit{toy}}
& Observed & 11.792 & 0.355 & $-$ & 12.384 & 0.417 & $-$ & 13.049 & 0.477 & $-$ & 13.836 & 0.533 & $-$ \\
& HaLRTC   & 13.294 & 0.519 & 3.981 & 14.649 & 0.586 & 	3.787  & 16.146 & 0.650 & 3.935  & 17.938 & 0.720 & 3.949\\
& TNN      & 13.211 & 0.514 & 340.771 & 14.349 & 0.580 & 	344.170  & 15.869 & 0.642 & 332.663 & 17.505 & 0.710 & 327.541 \\
& TMac     & 13.022 & 0.441 & 7.477 & 15.775 & 0.536 &  	10.515  & 19.326 & 0.646 & 8.693 & 23.441 & 0.730 & 11.164 \\
& TFTC     & \uline{22.861} & \uline{0.652} & 2.498 
 & 23.654 & 0.690 & 	10.961 & 24.420 & 0.726 & 4.208  & 25.286 & 0.759 & 3.782  \\
& SNN      & 13.403 & 0.520 & 8.552 & 14.626 & 0.588 & 	40.289  & 16.272 & 0.654 & 37.338 & 18.104 & 0.725 & 37.703 \\
& Tucker   & 20.451 & 0.626 & 5.086 & 24.287 & 0.714 & 2.247 & 25.212 & 0.753 & 4.307 	 & 26.151 & 0.787 & 1.979 \\
& TT       & 19.779 & 0.613 & 8.937 & \uline{24.470} & \uline{0.720} & 6.725 & 25.390 & 0.759 & 	1.785 & 26.350 & 0.793 & 4.659 \\
& FCTN     & 16.116 & 0.472 & 12.794 & 22.947 & 0.700 &  	13.299 & \uline{27.326} & \uline{0.790} & 10.341  & \uline{28.589} & \uline{0.825} & 10.374  \\
& iFCTN     & \textbf{23.204} & \textbf{0.667} & 7.164
 & \textbf{25.186} & \textbf{0.730} & 2.800 & \textbf{27.634} & \textbf{0.818} & 2.867  & \textbf{29.483} & \textbf{0.854} & 2.220 \\
\bottomrule
\end{tabular}}
\vspace{-0.6em}
\end{table}

    \begin{table}[ht]
\centering
\renewcommand{\arraystretch}{1.2}
\caption{Comparison of storage costs on the \textit{flower} dataset ($256 \times 256 \times 31$). The most compact representation is highlighted in \textbf{bold}.}
\label{tab:storage_cost}
\begin{tabular}{lccc}
\toprule
Method & Rank Configuration & Parameters & Storage Ratio (\%) \\
\midrule
Tucker & (8, 8, 8)        & 4,856  & 0.24\% \\
TT      & (9, 9)               & 23,319 & 1.15\% \\
FCTN  & (4, 4, 4)            & 8,688  & 0.45\% \\
iFCTN   & (4, 4, 4) & \textbf{4,344} & \textbf{0.21\%} \\
\bottomrule
\end{tabular}
\end{table}

\subsection{Multispectral images} 
    We first evaluate three MSI data (\textit{face, toy, flower}) from \textit{CAVE \footnote{http://www.cs.columbia.edu/CAVE/databases. The results for the \textit{toy} dataset are provided in the appendix due to space constraints.}}, each of size $256 \times 256 \times 31$ (spatial height $\times$ spatial width $\times$ bands). 
    For iFCTN-TC, following \cite{revealFCTN}, we set 
    $R_{1,2} \in \left\{3,6,9,15,30,36\right\}$ and $R_{1,3}= R_{2,3} \in \left\{3,6,9\right\}$. For FCTN-TC, we set
    $R_{1,2} \in \left\{2,3,6,9,15,30,36\right\} $ and $R_{1,3}= R_{2,3} \in \left\{3,6,9,12\right\}$. For TT-TC and Tucker-TC rank, we select ranks from  $\left\{2,3,6,9,12,15\right\}$. We report the best performance overall in rank selections.
   
    Fig.~\ref{fig:MSI_PSNR} presents the reconstruction results and the corresponding PSNRs for \textit{flower} and \textit{face}. We observe that iFCTN is robust and efficient. For instance, on both \textit{flower} and \textit{face}, its PSNR exceeds the second-best result by 2.5~dB. Due to space limitations, we place the results corresponding to \textit{Toy} in the supplementary materials.
    
    We continue to present the results over fiber missing in terms of bands in Table~\ref{tab:MSI_FM}. The PSNR and SSIM outcomes are similar to those shown in  Fig.~\ref{fig:MSI_PSNR}. For instance, iFCTN-TC exceeds the second-best method by 3~dB in PSNR at FM = $60\%$ for \textit{face}, and by 1.5~dB in PSNR at FM = $70\%$ for \textit{flower}.      

    We evaluate the storage cost of several decomposition methods---Tucker, TT, FCTN, and iFCTN---on the \textit{flower}. The numerical results are summarized in Table~\ref{tab:storage_cost}. The iFCTN decomposition involves 4,344 parameters, which corresponds to 0.21\% of the total tensor elements. Under the specified rank configurations, the parameter counts for the TT and FCTN decompositions are 23,319 (1.15\%) and 8,688 (0.45\%), respectively, while the Tucker decomposition requires 4,856 parameters (0.24\%). These data points reflect the storage requirements of each method for the given dataset.

    \begin{figure}[ht]
        \centering
    \includegraphics[width=\textwidth]{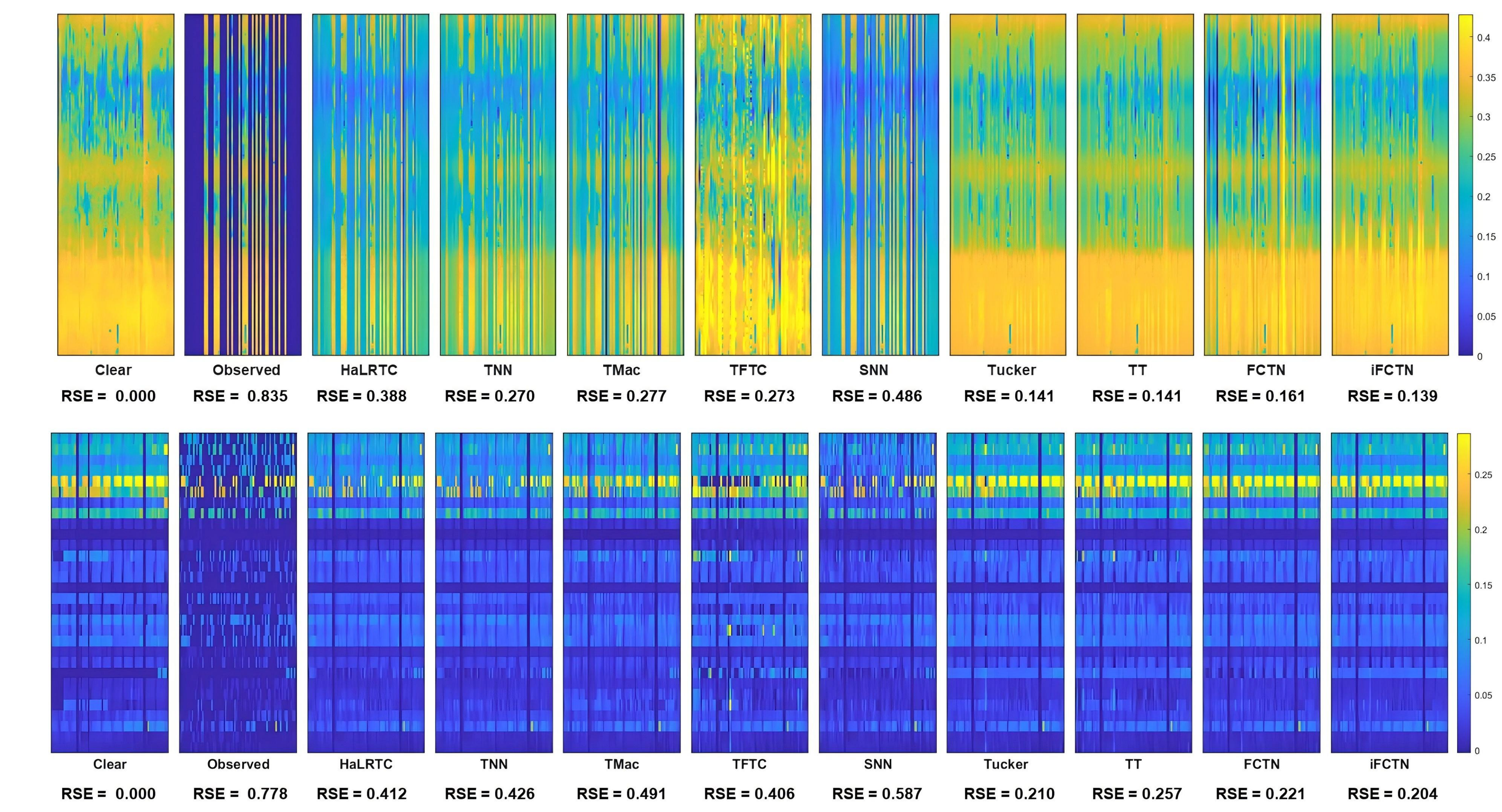}
        \caption{Reconstructions on Day 1. The first row is \textit{Guangzhou} with FM = $70\%$ and the second row is \textit{Birmingham} with FM = $60\%$.}
        \label{fig:tarffic_FM}
    \end{figure}

    \begin{table}[h]
\centering
\setlength{\tabcolsep}{4pt}
\renewcommand{\arraystretch}{1.15}
\footnotesize
\caption{Performance on RSE, RMSE, and runtime over 5 independent runs for \textit{Guangzhou} and \textit{Birmingham} under different random missing ratios. The best results are highlighted in {\bf bold}; the second-best are \underline{underlined}.}
\label{tab:traffic_RM}
\resizebox{\textwidth}{!}{
\begin{tabular}{l l cc c cc c cc c cc c}
\toprule
\multirow{2}{*}{Dataset} & \multirow{2}{*}{method} & \multicolumn{3}{c}{$99\%$} & \multicolumn{3}{c}{$95\%$} & \multicolumn{3}{c}{$90\%$} & \multicolumn{3}{c}{$80\%$} \\
\cmidrule(lr){3-5}\cmidrule(lr){6-8}\cmidrule(lr){9-11}\cmidrule(lr){12-14}
& & RSE & RMSE & time (s) & RSE & RMSE & time (s)& RSE & RMSE & time (s)& RSE & RMSE & time (s) \\
\midrule
\multirow{10}{*}{\textit{Guangzhou}} & Observed & 0.995 & 0.337 & $-$ & 0.975 & 0.337 & $-$ & 0.949 & 0.337 & $-$ & 0.894 & 0.337 & $-$ \\
 & HaLRTC & 0.975 & 0.336 & 2.852 & 0.855 & 0.333 & 2.749 & 0.729 & 0.330 & 2.959 & 0.582 & 0.325 & 3.199 \\
 & TNN & 0.351 & 0.119 & 310.606 & 0.162 & 0.056 & 	300.348 & 0.144 & 0.051 & 294.694  & 0.129 & 0.049 & 284.548 \\
 & TMac & 0.691 & 0.234 & 62.824 & 0.174 & 0.063 & 44.175   & 0.145 & 0.051 & 39.911  & 0.123 & 0.053 & 37.639 \\
 & TFTC & 0.970 & 0.329 & 129.794 & 1.060 & 0.367 & 125.926   & 1.046 & 0.372 & 132.823 & 0.711 & 0.268 & 140.939 \\
 & SNN & 0.993 & 0.336 & 28.526 & 0.964 & 0.334 & 31.338  & 0.932 & 0.331 & 28.632 & 0.869 & 0.327 & 29.356 \\
 & Tucker & 0.229 & 0.078 & 101.479 & \uline{0.141} & \uline{0.049} & 93.766 & \uline{0.133} & \uline{0.047} & 51.052 & 0.117 & 0.044 & 24.524 \\
 & TT & 0.346 & 0.117 & 190.311 & 0.142 & 0.055 & 182.771  & 0.137 & 0.048 & 168.675  & 0.129 & 0.048 & 69.809 \\
 & FCTN & \uline{0.194} & \uline{0.066} & 43.768 & 0.152 & 0.052 & 48.389  & 0.150 & 0.053 & 39.224  & \uline{0.106} & \uline{0.040} & 42.351 \\
 & iFCTN & \textbf{0.178} & \textbf{0.060} &  10.294
 & \textbf{0.116} & \textbf{0.042} & 8.787 & \textbf{0.109} & \textbf{0.040} & 3.634  & \textbf{0.101} & \textbf{0.039} &  2.210 \\
\midrule
\multirow{10}{*}{\textit{Birmingham}} & Observed & 0.996 & 0.197 & $-$ & 0.970 & 0.195 & $-$ & 0.946 & 0.195 & $-$ & 0.893 & 0.193 & $-$ \\
 & HaLRTC & 0.996 & 0.197 & 0.284 & 0.730 & 0.147 & 0.902 & 0.394 & 0.081 & 0.936 & 0.774 & 0.033 & 0.906 \\
 & TNN & 0.996 & 0.197 & 21.911 & 0.462 & 0.093 & 21.291 & 0.250 & 0.052 & 23.056 & 0.154 & 0.033 & 21.144 \\
 & TMac & 1.021 & 0.202 & 2.461 & 0.455 & 0.091 & 3.264  & 0.272 & 0.056 & 4.305  & \uline{0.149} & \uline{0.032} & 	3.710 \\
 & TFTC & 0.992 & 0.196 & 13.733 & 0.965 & 0.194 & 16.448  & 1.111 & 0.229 & 17.939  & 1.164 & 0.282 & 17.331 \\
 & SNN & 0.993 & 0.196 & 1.278 & 0.480 & 0.097 & 7.852 & 0.235 & 0.048 & 8.307  & 0.558 & 0.121 & 6.064 \\
 & Tucker & \textbf{0.722} & \textbf{0.143} & 3.878
 & 0.414 & 0.083 & 3.913 & 0.230 & 0.047 & 1.865  & 0.170 & 0.037 & 3.902 \\
 & TT & 0.805 & 0.159 & 11.532 & \uline{0.281} & \uline{0.056} & 11.543  & \uline{0.189} & \uline{0.039} & 6.622 & 0.175 & 0.038 & 1.820 \\
 & FCTN & 0.800 & 0.158 & 4.032 & 0.430 & 0.086 & 3.906 & 0.239 & 0.049 & 4.261  & 0.156 & 0.034 & 3.194 \\
 & iFCTN & \uline{0.739} & \uline{0.146} & 3.521
 & \textbf{0.192} & \textbf{0.039} & 3.613  & \textbf{0.177} & \textbf{0.036} & 1.674 & \textbf{0.145} & \textbf{0.031} & 0.516\\
\bottomrule
\end{tabular}}
\end{table}
    
    \subsection{Traffic data} 
    We further evaluate iFCTN-TC on two traffic datasets: 
    \begin{itemize}
    \item \textit{Guangzhou\footnote{https://doi.org/10.5281/zenodo.1205229}}     contains travel speed observations from 214 road segments in two months (61 days from August 1, 2016 to September 30, 2016) at 10 min interval (144 time intervals in a day), whose size is $214 \times 61 \times 144$ (road segment $\times$ day $\times$ time interval); 
    \item \textit{Birmingham\footnote{https://archive.ics.uci.edu/dataset/482/parking+birmingham}} is collected from 18 Car park ID in two months (October 4,2016 to December 19, 2016) at 30 min interval, whose size is $30 \times 77 \times 18$ (time interval $\times$ day $\times$ zone).
    \end{itemize}

    Table~\ref{tab:traffic_RM} and Fig.~\ref{fig:tarffic_FM} present the numerical and visual reconstruction results for \textit{Guangzhou} and \textit{Birmingham} under different scenarios. We observe that iFCTN is robust and efficient. For instance, on \textit{Guangzhou} at RM = $99\%$, its RSE improves upon the second-best result by 0.016, while requiring only one-fifth of the computation time compared to the standard FCTN. Furthermore, under severe structural missing conditions (\textit{Guangzhou} at FM = $70\%$ and \textit{Birmingham} at FM = $60\%$), iFCTN successfully restores complex traffic patterns, outperforming the second-best RSE results by 0.002 and 0.006, respectively.

\section{Conclusion}
    We have presented intra-block FCTN (iFCTN), an (un)folding-free reformulation of the fully connected tensor network decomposition. By decomposing the tensor factors into matrix factors with the KR product, iFCTN eliminates the expensive auxiliary subnetworks reconstruction that burdens standard FCTN. When applied to the TC problem, the proposed PAM algorithm exhibits provable convergence,  along with favorable reconstruction accuracy and robust performance. 
    Extensive experiments on public data show that iFCTN delivers state-of-the-art recovery results. 
    
     {\bf Limitations.} Similar to CP, TR, and FCTN decompositions \cite{kolda2009tensor, TR, FCTN}, the proposed iFCTN decomposition faces a difficult rank-selection problem. In the future, we will develop principled strategies for rank determination and extend iFCTN to online and large-scale   scenarios.

\section*{Declaration of competing interest}
The authors declare that they have no known competing financial interests or personal relationships that could have appeared to
influence the work reported in this paper.

\section*{Acknowledgments}
This research is supported by NSFC (Nos. 12471282  and 12131004).

\section*{Data availability}
Data are publicly available.

\clearpage
\appendix
\section*{Appendix}
\setcounter{equation}{0}
\setcounter{proposition}{0}
\setcounter{theorem}{0}
\setcounter{figure}{0}
\setcounter{table}{0}
\setcounter{lemma}{0}



\section{Proof of Proposition 1}
\label{sec: Proposition}

\begin{proposition}[\textbf{Mode-$k$   Cherry Sub-network}]\label{propos:cherrysubnet}
        Suppose that $\mathcal{X} = \operatorname{iFCTN} \left(\left\{\textbf{G}_{t}\right\}_{t=1}^{N}\right)$. 
        Then, the cherry sub-network $\textbf{Z}_{k} \in \mathbb R^{\prod_{i \in [N] \setminus k} I_{i} \times \prod_{i \in [N] \setminus k} R_{k,i}}$  comprises two components: (i) the cherry factors $\{\mathbf{G}_{t,i}\}_{i\neq t}$ for all  $t\in[N]\setminus k$; (ii) the   cherry factors $\textbf{G}_{k}$. 
        Specifically, 
        \begin{align}\label{equ:Zk}
             \textbf{Z}_{k} = & \mathrm{diag}\left(\mathrm{vec}\left(\mathcal{S}_{k}\right)
             \right) \times  \mathop{\otimes} \limits_{i \in \left\{N,\dots,1\right\} \setminus k} \textbf{G}_{i,k}^{\top},
        \end{align}
            where $\mathcal{S}_{k}$ is an $(N-1)$ th-order tensor shown as follows,
        \begin{align*}
             \mathcal{S}_{k} = & \mathop{\circ} \limits_{i<j, i, j \in [N] \setminus k} \left\{\operatorname{reshape}\left(\mathbf{G}_{i,j}^{\top}\mathbf{G}_{j,i},[1,\dots,I_{i},\dots,I_{j},1,\dots,1]\right)\right\}.  
     \end{align*}
     Moreover, the mode-$k$ unfolding of $\mathcal{X}$ can be expressed by:  
        \begin{equation}
            \left(\mathcal{X}_{\left(k\right)}\right)^{\top} = \textbf{Z}_{k} \left(\mathcal{G}_{(k)}\right)^{\top},
        \end{equation}
        where $\mathcal{G}_{(k)}=\mathcal{CT}_k\left(\{{\bf G}_i\}_{i=1,i\neq k}^N\right).$
     \end{proposition}

     \begin{proof}
         According to the definition of cherry product, $\mathcal{Z}_{k} = \operatorname{iFCTN}(\textbf{G}_{1},\dots,\textbf{G}_{k-1},\textbf{G}_{k+1},\dots,\textbf{G}_{N})$ is of size $(I_{1}, R_{1,k}, \dots, I_{k-1}, R_{k-1,k}, R_{k+1,k}, I_{k+1}, \dots, R_{N,k}, I_{N})$ and is given by
         \begin{equation}
             \begin{aligned}
                 & \mathcal{Z}_{k}(i_1, r_{1,k}, \dots, i_{k-1}, r_{k-1,k}, r_{k+1,k}, i_{k+1},\dots, r_{N,k}, i_N) \\
                 & = \prod_{m=1}^{k-1}\textbf{G}_{m,k}(r_{m,k}, i_{m}) \prod_{n = k+1}^{N}\textbf{G}_{n,k}(r_{n,k}, i_{n}) \left(\sum_{j \in [N] \setminus k} \sum_{i<j} \textbf{G}_{i,j}^{\top} \textbf{G}_{j,i} \right)_{I_i,I_j}\\
                 & = \prod_{m=1}^{k-1}\textbf{G}_{m,k}(r_{m,k}, i_{m}) \prod_{n = k+1}^{N}\textbf{G}_{n,k}(r_{n,k}, i_{n}) \mathcal{S}_{k} (i_{1:k-1}, i_{k+1:N}), 
             \end{aligned}
         \end{equation}
         where $\mathcal{S}_{k}(i_{1:k-1}, i_{k+1:N}) \in \mathbb{R}^{\prod_{i \neq k} {I_{i}} \times \prod_{i \neq k} {I_{i}}}$ is defined as
          \begin{align*}
             \mathcal{S}_{k} = & \mathop{\circ} \limits_{i<j, i, j \in [N] \setminus k} \left\{\operatorname{reshape}\left(\mathbf{G}_{i,j}^{\top}\mathbf{G}_{j,i},[1,\dots,I_{i},\dots,I_{j},1,\dots,1]\right)\right\}.  
        \end{align*}
        
         Perform a generalized matricization of the tensor $\mathcal{Z}_{k}$ to obtain $\textbf{Z}_{k}^{\top}$, in which all modes $\left\{R_{i}\right\}_{i \neq k}$  are arranged row-wise and all modes $\left\{I_{i}\right\}_{i \neq k}$ are column-wise. 
        \begin{equation}
            \begin{aligned}
                & \textbf{Z}_{k}^{\top} \left((r_{1:k-1,k}, r_{k+1:N,k}), (I_{1:k-1}, I_{k+1:N})\right) \\
                & =  \mathcal{Z}_{k} (i_1, r_{1,k}, \dots, i_{k-1}, r_{k-1,k}, r_{k+1,k}, i_{k+1},\dots, r_{N,k}, i_N).
            \end{aligned}
        \end{equation}
        Since the Kronecker product $\otimes$ naturally captures the structure of this combined indexing, we have
        \begin{equation}
            \textbf{K}(:,col(i_{1:k-1}, i_{k+1:N}) = \mathop{\otimes} \limits_{i \in \left\{N,\dots,1\right\} \setminus k} \textbf{G}_{i,k}.
        \end{equation}
        This implies that in the matrix $\textbf{Z}_{k}^{\top}$, the column indexed by $(I_1, \dots, I_{k-1}, I_{k+1}, \dots, I_{N})$ can be expressed as:
        \begin{equation}
            \begin{aligned}
                & \textbf{Z}_{k}^{\top}(:, (i_{1:k-1}, i_{k+1:N}))  = \mathcal{S}_{k} (i_{1:k-1}, i_{k+1:N})) \times \textbf{K}(:,col(i_{1:k-1}, i_{k+1:N})
            \end{aligned}
        \end{equation}
        Therefore, the entire matrix $\textbf{Z}_{k}^{\top}$ can be expressed as:
        \begin{equation}
            \textbf{Z}_{k}^{\top} = \mathop{\otimes} \limits_{i \in \left\{N,\dots,1\right\} \setminus k} \textbf{G}_{i,k} \times  \mathrm{diag}\left(\mathrm{vec}\left(\mathcal{S}_{k}\right)
             \right),
        \end{equation}
        Namely, 
        \begin{align}\label{equ:Zk}
             \textbf{Z}_{k} = & \mathrm{diag}\left(\mathrm{vec}\left(\mathcal{S}_{k}\right)
             \right) \times  \mathop{\otimes} \limits_{i \in \left\{N,\dots,1\right\} \setminus k} \textbf{G}_{i,k}^{\top} \nonumber,
        \end{align}
        
        Given the expression for $\textbf{Z}_{k}$, it is straightforward to obtain the expansion for the mode-k unfolding of $\mathcal{X}$:
         \begin{align}
            \left(\mathcal{X}_{\left(k\right)}\right)^{\top} = \textbf{Z}_{k} \left(\mathcal{G}_{(k)}\right)^{\top}. \nonumber
         \end{align}
         where $\mathcal{G}_{(k)}=\mathcal{CT}_k\left(\{{\bf G}_i\}_{i=1,i\neq k}^N\right).$
         
         The proof is completed.
     \end{proof}

\subsection{Proof of Proposition 2}
\begin{proposition}[\textbf{Uniqueness}]
     Let $\mathcal{X}$ be an $N$-th order tensor admitting an iFCTN decomposition with rank $\mathbf{R}$. The cherry factors $\{M_{k,j}\}_{j \neq k}$ in iFCTN are essentially unique (up to permutation and scaling), provided that Kruskal's condition is satisfied. Specifically, for any mode $k$, if the sum of k-ranks of the cherry factors satisfies:
    $$\sum_{j \neq k} k_{\text{rank}}(M_{k,j}) \ge  I_k + N-1,$$ 
    then the factors $\{M_{k,j}\}$ are unique up to a common permutation matrix $\mathbf{\Pi}$ and scaling diagonal matrices $\{\mathbf{\Lambda}_j\}$ such that $\prod \mathbf{\Lambda}_j = \mathbf{I}$.
\end{proposition}

\begin{proof}
   Recall the definition of the iFCTN core tensor $\mathcal{G}_k$ for a specific mode $k$. Its mode-$k$ unfolding is defined as the KR product of the cherry factors:
    $$
    (\mathcal{G}_{k})_{(k)}^\top = \mathbf{M}_{k,N} \odot \dots \odot \mathbf{M}_{k,k+1} \odot \mathbf{M}_{k,k-1} \odot \dots \odot \mathbf{M}_{k,1},
    $$
    where $\mathbf{M}_{k,j} \in \mathbb{R}^{R_{k,j} \times I_k}$. Let $\mathbf{m}_{k,j}^{(t)}$ denote the $t$-th column of the matrix $M_{k,j}$. The tensor $\mathcal{G}_k$ can be explicitly written as a sum of rank-1 tensors:
    $$
    \mathcal{G}_{k} = \sum_{t=1}^{I_{k}} \mathbf{m}_{k,1}^{(t)} \circ \dots \circ \mathbf{e}_{t} \circ \dots \circ \mathbf{m}_{k,N}^{(t)},
    $$
    where $\mathbf{e}_t \in \mathbb{R}^{I_k}$ is the standard basis vector (the $t$-th column of the identity matrix $I_{I_k}$).
    This formulation confirms that $\mathcal{G}_k$ admits a CP decomposition with rank $I_k$, where the factor matrix corresponding to mode $k$ is fixed as the identity matrix $\mathbf{I}_{I_k}$. 
    
    According to the celebrated Kruskal's Theorem \cite{kolda2009tensor}] for the uniqueness of CP decomposition, a sufficient condition for the factors to be essentially unique is based on the k-rank of the factor matrices. For the core tensor $\mathcal{G}_k$, the factors are $\{\mathbf{M}_{k,j}\}_{j \neq k}$ and the identity matrix. Since its full rank, the condition for uniqueness becomes:
    $$k_{\text{rank}}(\mathbf{I}) + \sum_{j \neq k} k_{\text{rank}}(M_{k,j}) \ge 2 \cdot \text{rank}(\mathcal{G}_k) + (N) - 1.$$
    Substitute $\text{rank}(\mathcal{G}_k) = I_k$:
    $$I_k + \sum_{j \neq k} k_{\text{rank}}(M_{k,j}) \ge 2 I_k + N - 1.$$
    Simplifying this inequality yields:$$\sum_{j \neq k} k_{\text{rank}}(M_{k,j}) \ge I_k + N - 1.$$
\end{proof}

\subsection{Proof of Proposition 3}
\begin{proposition}[Transposition Invariance]
    Supposing that an $N$th-order tensor $\mathcal{X} \in \mathbb{R}^{I_1 \times \dots \times I_N}$ has the following iFCTN decomposition parameterized by the set of cherry factors $\mathbf{M} = \{\mathbf{M}_{k,j} \mid 1 \le k,j \le N, k \neq j\}$:
    $$\mathcal{X} = \text{iFCTN}(\{\mathbf{M}_{k,j}\}_{k,j}).$$
    Then, its vector $\mathbf{n}$-based generalized tensor transposition $\vec{\mathcal{X}}^{\mathbf{n}}$ can be expressed as:
    $$\vec{\mathcal{X}}^{\mathbf{n}} = \text{iFCTN}(\{\mathbf{M}^{\mathbf{n}}_{k,j}\}_{k,j}),$$
    where $\mathbf{n}=(n_{1},n_{2},\cdot\cdot\cdot,n_{N})$ is a reordering of the vector $(1,2,...,N)$, and the new factors are given by the mapping $\mathbf{M}^{\mathbf{n}}_{k,j} = \mathbf{M}_{n_k, n_j}$.
\end{proposition}

\begin{proof}
    Let the index vector of the original tensor be $(i_1, i_2, \dots, i_N)$. The element-wise definition of the iFCTN decomposition for $\mathcal{X}$ is given by contracting the cherry factors:
    $$\mathcal{X}(i_1, i_2, \dots, i_N) = \sum_{r_{1,2}=1}^{R_{1,2}} \dots \sum_{r_{N-1,N}=1}^{R_{N-1,N}} \prod_{k=1}^{N} \left( \bigodot_{j \neq k} \mathbf{m}_{k,j}^{(r_{k,j})} \right)_{i_k},$$
    where $\mathbf{m}_{k,j}^{(r_{k,j})}$ represents the column of factor $M_{k,j}$ corresponding to the rank index $r_{k,j}$.
    
    The entry of the transposed tensor $\vec{\mathcal{X}}^{\mathbf{n}}$ at indices $(j_1, j_2, \dots, j_N)$ corresponds to the entry of the original tensor at the permuted indices:
    $$\vec{\mathcal{X}}^{\mathbf{n}}(j_1, j_2, \dots, j_N) = \mathcal{X}(j_{p_1}, j_{p_2}, \dots, j_{p_N}),$$
    where the indices are mapped such that if the $k$-th dimension of $\vec{\mathcal{X}}^{\mathbf{n}}$ corresponds to the $n_k$-th dimension of $\mathcal{X}$. 
    Thus, we set the inputs to $\mathcal{X}$ as $i_{n_k} = j_k$.Expanding the summation for the permuted indices:
    $$
    \begin{aligned}
        \vec{\mathcal{X}}^{\mathbf{n}}(j_1, \dots, j_N) &= \mathcal{X}(j_{n^{-1}_1}, \dots, j_{n^{-1}_N}) \\
        &= \sum_{r_{n_1,n_2}} \dots \sum_{r_{n_{N-1},n_N}} \prod_{k=1}^{N} \left( \bigodot_{t \neq n_k} \mathbf{m}_{n_k,t}^{(r_{n_k,t})} \right)_{j_k}.
    \end{aligned}
    $$
    
    In the original tensor, mode $n_k$ interacts with mode $n_l$ via the factor pair $(M_{n_k, n_l}, M_{n_l, n_k})$. In the transposed tensor $\vec{\mathcal{X}}^{\mathbf{n}}$, these modes are now at positions $k$ and $l$. We define the new factor set $\mathbf{M}^{\mathbf{n}}$ such that the factor connecting mode $k$ and mode $l$ in the new tensor :$$M^{\mathbf{n}}_{k,l} := M_{n_k, n_l}.$$
    
    Substituting this into the expansion, the interaction between the $k$-th and $l$-th dimension of $\vec{\mathcal{X}}^{\mathbf{n}}$ is governed by the rank indices $r_{n_k, n_l}$. By renaming the rank summation indices $r'_{k,l} = r_{n_k, n_l}$, the expression becomes:
    $$\vec{\mathcal{X}}^{\mathbf{n}}(j_1, \dots, j_N) = \sum_{r'_{1,2}} \dots \sum_{r'_{N-1,N}} \prod_{k=1}^{N} \left( \bigodot_{l \neq k} \mathbf{m}^{\mathbf{n}, (r'_{k,l})}_{k,l} \right)_{j_k}.$$
    This matches the definition of an iFCTN decomposition on $\vec{\mathcal{X}}^{\mathbf{n}}$ using the permuted factors. Thus, the permuted set $\{M_{n_k, n_l}\}$ remains a valid fully-connected set of factors for the new order.
\end{proof}

\subsection{Proof of Theorem 1}
\label{sec:Theorem}

Before proving  Theorem \ref{Theorem}, we first rewrite the proposed iFCTN-TC model as follows,
    \begin{equation}\label{eq:iFCTN-TC}
        \min _{\mathcal{M}} F(\mathcal{M}) = f(\mathcal{M}) + h(\mathcal{X}),
    \end{equation}
    where $\mathcal{M} =\left(\textbf{G}, \mathcal{X}\right)$,  $f(\mathcal{M})  =  \frac{1}{2}\left\|\mathcal{X}-\operatorname{iFCTN}\left(\textbf{G}_{1},\dots, \textbf{G}_{N}\right)\right\|_{F}^{2}$, and $h(\mathcal{X}) = \iota (\mathcal{X})$.

    \begin{theorem}\label{Theorem}
        Suppose that the sequence $\left\{\left(\textbf{G}^{(s)},\mathcal{X}^{(s)}\right)\right\}$ obtained by the Algorithm \ref{alg:iFCTN-TC} is bounded, then it globally converges to a critical point 
        $(\textbf{G}^{*}, \mathcal{X}^{*}) $ of $F(\textbf{G},\mathcal{X})$.
    \end{theorem}

    
    \begin{lemma}\label{lemma:KL}
        The objective function $F(\mathcal{M})$ in problem (\ref{eq:iFCTN-TC}) is a Kurdyka-Łojasiewicz (KŁ) function.
    \end{lemma}

    \begin{proof}
        Following \cite{attouch2013convergence,KL}, since $f(\mathcal{M})$ is a polynomial function of $N(N-1)+1$ coupling variations, it is an obviously real-analytic function. Regarding  $h(\mathcal{X})$, since the constraint set $\left\{\mathcal{X}: \mathcal{P}_{\Omega}(\mathcal{X})=\mathcal{P}_{\Omega}(\mathcal{O})\right\}$ is semi-algebraic, $\iota(\mathcal{X})$ is a semi-algebraic function. Therefore, $F(\mathcal{M})$, as a finite sum of real-valued analytic and semi-algebraic functions, is a  KŁ function.
    \end{proof}
        
    \begin{lemma}\label{lemma:Sufficient}[Sufficient decrease]
        Let $\left\{\mathcal{M}^{(s)}\right\}_{s \in \mathbb{N}}$ be a  sequence generated by Algorithm \ref{alg:iFCTN-TC}. Then, the sequence explicitly satisfies
        $$
        \begin{aligned}
            F(\mathcal{M}^{(s)}) - F(\mathcal{M}^{(s+1)}) \geq \frac{\rho}{2} \left\|\mathcal{M}^{(s+1)} - \mathcal{M}^{(s)} \right\|_{F}^{2},
        \end{aligned}
        $$
        where 
        $\left\|\mathcal{M}^{(s+1)}-\mathcal{M}^{(s)}\right\|_{F}^{2}= \left\|\mathcal{X}^{(s+1)} - \mathcal{X}^{(s)}\right\|_{F}^{2}$  $+\sum_{k=1}^{N}\sum_{i \neq k}^{N} \left\|\textbf{G}_{k,i}^{(s+1)}-\textbf{G}_{k,i}^{(s)}\right\|_{F}^{2}$.
    \end{lemma}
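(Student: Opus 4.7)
The plan is to exploit the fact that every block subproblem in Algorithm~\ref{alg:iFCTN-TC} is solved exactly as a proximal minimization, so optimality at the new iterate immediately yields a per-block descent inequality with modulus $\rho/2$, and summing these inequalities along the sweep from $s$ to $s+1$ telescopes to the claimed joint descent. A preliminary observation is that $\mathcal{X}^{(s)}$ stays feasible for every $s$: by induction, $\mathcal{X}^{(0)}=\mathcal{O}$ satisfies $\mathcal{P}_\Omega(\mathcal{X}^{(0)})=\mathcal{P}_\Omega(\mathcal{O})$, and the closed-form update~\eqref{eq:updateX} preserves the entries on $\Omega$ by construction. Hence $h(\mathcal{X}^{(s)})=0$ at every iterate, so the indicator contributes nothing during the $\mathbf{G}$-updates (where $\mathcal{X}$ is fixed at $\mathcal{X}^{(s)}$) and $F$ agrees with $f$ along the whole sequence.

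For each $(k,i)$-update, $\mathbf{G}_{k,i}^{(s+1)}$ is, by definition, a minimizer of the proximal subproblem in~\eqref{eq:subproblem}. Comparing its objective value with the value at the trial point $\mathbf{G}_{k,i}^{(s)}$, at which the proximal penalty vanishes, gives
\begin{align*}
 & f\bigl(\mathbf{G}_{<k,i}^{(s+1)},\mathbf{G}_{k,i}^{(s+1)},\mathbf{G}_{>k,i}^{(s)},\mathcal{X}^{(s)}\bigr)+\tfrac{\rho}{2}\bigl\|\mathbf{G}_{k,i}^{(s+1)}-\mathbf{G}_{k,i}^{(s)}\bigr\|_F^2\\
 & \quad\le f\bigl(\mathbf{G}_{<k,i}^{(s+1)},\mathbf{G}_{k,i}^{(s)},\mathbf{G}_{>k,i}^{(s)},\mathcal{X}^{(s)}\bigr).
\end{align*}
An analogous argument applied to the $\mathcal{X}$-subproblem~\eqref{Xupdate}, together with $h(\mathcal{X}^{(s)})=h(\mathcal{X}^{(s+1)})=0$, yields
\[
 F\bigl(\mathbf{G}^{(s+1)},\mathcal{X}^{(s+1)}\bigr)+\tfrac{\rho}{2}\bigl\|\mathcal{X}^{(s+1)}-\mathcal{X}^{(s)}\bigr\|_F^2\le F\bigl(\mathbf{G}^{(s+1)},\mathcal{X}^{(s)}\bigr).
\]

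Finally, I would chain all $N(N-1)+1$ inequalities along the sweep: the $f$-value on the right-hand side of each $(k,i)$-inequality is exactly the $f$-value on the left-hand side of the next block's inequality, so the intermediate $f$-values cancel telescopically, leaving $f(\mathbf{G}^{(s)},\mathcal{X}^{(s)})-f(\mathbf{G}^{(s+1)},\mathcal{X}^{(s)})\ge\frac{\rho}{2}\sum_{k=1}^{N}\sum_{i\neq k}\|\mathbf{G}_{k,i}^{(s+1)}-\mathbf{G}_{k,i}^{(s)}\|_F^2$. Adding the $\mathcal{X}$-update inequality and using $h(\mathcal{X}^{(s)})=0$ then promotes both sides to $F$ and delivers the claimed bound. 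There is no substantive obstacle beyond bookkeeping: one must verify that the block ordering coded in Algorithm~\ref{alg:iFCTN-TC} matches the telescoping chain and that feasibility is preserved so $h$ stays at $0$; the $\rho/2$ modulus comes for free from the strong convexity of each proximal subproblem, since $f$ is convex in each $\mathbf{G}_{k,i}$ when the remaining blocks are frozen and the proximal penalty adds a quadratic of modulus $\rho$.
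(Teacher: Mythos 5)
Your proposal is correct and follows essentially the same route as the paper's proof: the per-block optimality comparison against the previous iterate as a trial point, the analogous inequality for the $\mathcal{X}$-subproblem, and a telescoping summation over all blocks. Your explicit check that feasibility is preserved so the indicator term vanishes along the sequence is a small but welcome addition of rigor that the paper leaves implicit in this lemma.
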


    \begin{proof}
       By the update scheme of $\left\{\textbf{G}_{k,i}^{(s)}\right\}$, $k \in [N]$ and $i \in [N] \setminus k$, we have
        \begin{equation}\label{G_ineq}
            \begin{aligned}
            & f\left(\mathbf{G}_{<k,i}^{(s+1)},\mathbf{G}_{k,i}^{(s+1)},\mathbf{G}_{>k,i}^{(s)},\mathcal{X}^{(s)}\right) + \frac{\rho}{2}\left\|\textbf{G}_{k,i}^{(s+1)}-\textbf{G}_{k,i}^{(s)}\right\|_{F}^{2}  
            \leq  f\left(\mathbf{G}_{<k,i}^{(s+1)},\mathbf{G}_{k,i}^{(s)},\mathbf{G}_{>k,i}^{(s)},\mathcal{X}^{(s)}\right). 
            \end{aligned}
        \end{equation}
        Likewise, it follows from the update scheme of $\mathcal X$ that 
        \begin{equation}\label{X_ineq}
            \begin{aligned}
                & f\left(\textbf{G}^{(s+1)},\mathcal{X}^{(s+1)}\right) + \frac{\rho}{2} \left\|\mathcal{X}^{(s+1)}-\mathcal{X}^{(s)}\right\|_{F}^{2}
                 \leq f\left(\textbf{G}^{(s+1)},\mathcal{X}^{(s)}\right).  
            \end{aligned}
        \end{equation}
        Finally, by taking summation over \eqref{G_ineq} and \eqref{X_ineq} and eliminating the duplicates, we can deduce 
        \begin{equation*}
            \begin{aligned}
                 & F(\mathcal{M}^{(s)}) - F(\mathcal{M}^{(s+1)}) 
                 \geq  \frac{\rho}{2} \left\|\mathcal{X}^{(s+1)} - \mathcal{X}^{(s)}\right\|_{F}  +  \sum_{k=1}^{N}\sum_{i \neq k}^{N} \frac{\rho}{2}\left\|\textbf{G}_{k,i}^{(s+1)}-\textbf{G}_{k,i}^{(s)}\right\|_{F}^{2}.
            \end{aligned}
        \end{equation*}
        The proof is completed.
    \end{proof}

    \begin{lemma}[Relative Error Condition]
        Let $\left\{\mathcal{M}^{(s)}\right\}$ be be a bounded sequence generated by Algorithm \ref{alg:iFCTN-TC}. Then, there exists $\mathcal P^{(s+1)}\in \partial F(\mathcal{M}^{(s+1)})$ such that
        $$\left\|\mathcal P^{(s+1)}\right\|_{F} \leq L_{F} \left\|\mathcal{M}^{(s+1)} - \mathcal{M}^{(s)}\right\|_{F},$$
        where $L_{F} = N(N-1)\rho + \sum_{k=1}^{N} \sum_{i \neq k}^{N} L_{k,i}$. 
        Here, $L_{k,i}$ denotes the gradient's Lipschitz constant of $F(G, \mathcal{X})$ with respect to $G_{k,i}$.
    \end{lemma}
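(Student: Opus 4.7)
The plan is to verify the relative error (or subgradient) condition that, together with the sufficient decrease and the KL property proved in Lemmas 2 and 3, feeds into the standard Attouch--Bolte--Svaiter convergence machinery to conclude Theorem 1. Since the statement being outlined is the subgradient bound, I will focus only on it and then invoke the cited finite-length result.

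First, I would write down the first-order optimality conditions for the subproblems solved at step $s+1$. For every block $G_{k,i}$, since the $G_{k,i}$-subproblem is an unconstrained strongly convex smooth minimization, the stationarity condition yields
\begin{equation*}
\nabla_{G_{k,i}} f\bigl(\mathbf{G}^{(s+1)}_{<k,i},\mathbf{G}^{(s+1)}_{k,i},\mathbf{G}^{(s)}_{>k,i},\mathcal{X}^{(s)}\bigr) + \rho\bigl(\mathbf{G}^{(s+1)}_{k,i}-\mathbf{G}^{(s)}_{k,i}\bigr)=0.
\end{equation*}
For the $\mathcal{X}$-subproblem, there exists $\mathbf{U}^{(s+1)}\in\partial h(\mathcal{X}^{(s+1)})$ such that
\begin{equation*}
\nabla_{\mathcal{X}} f\bigl(\mathbf{G}^{(s+1)},\mathcal{X}^{(s+1)}\bigr) + \rho\bigl(\mathcal{X}^{(s+1)}-\mathcal{X}^{(s)}\bigr) + \mathbf{U}^{(s+1)} = 0.
\end{equation*}

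Next, I would construct the candidate element $\mathcal{P}^{(s+1)} = \bigl(\{\mathcal{P}^{(s+1)}_{k,i}\},\mathcal{P}^{(s+1)}_{\mathcal{X}}\bigr)$ of $\partial F(\mathcal{M}^{(s+1)})$ by taking
\begin{align*}
\mathcal{P}^{(s+1)}_{k,i} &= \nabla_{G_{k,i}} f\bigl(\mathbf{G}^{(s+1)},\mathcal{X}^{(s+1)}\bigr),\\
\mathcal{P}^{(s+1)}_{\mathcal{X}} &= \nabla_{\mathcal{X}} f\bigl(\mathbf{G}^{(s+1)},\mathcal{X}^{(s+1)}\bigr) + \mathbf{U}^{(s+1)}.
\end{align*}
By the block separability of $\partial F$ (only the indicator on $\mathcal{X}$ is nonsmooth), $\mathcal{P}^{(s+1)}\in\partial F(\mathcal{M}^{(s+1)})$. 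Substituting the stationarity conditions, every component of $\mathcal{P}^{(s+1)}$ can be rewritten as the difference between $\nabla_{G_{k,i}}f$ (respectively $\nabla_{\mathcal{X}}f$) evaluated at $\mathcal{M}^{(s+1)}$ and at the mixed iterate used in the subproblem, plus a $\rho$-scaled successive difference, e.g.
\begin{equation*}
\mathcal{P}^{(s+1)}_{k,i} = \nabla_{G_{k,i}} f(\mathcal{M}^{(s+1)}) - \nabla_{G_{k,i}} f\bigl(\mathbf{G}^{(s+1)}_{<k,i},\mathbf{G}^{(s+1)}_{k,i},\mathbf{G}^{(s)}_{>k,i},\mathcal{X}^{(s)}\bigr) - \rho\bigl(\mathbf{G}^{(s+1)}_{k,i}-\mathbf{G}^{(s)}_{k,i}\bigr).
\end{equation*}

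Then I would invoke block-Lipschitz continuity of the partial gradients of $f$. Since $f$ is a polynomial in $(\mathbf{G},\mathcal{X})$ and the sequence $\{\mathcal{M}^{(s)}\}$ is assumed bounded, each partial gradient $\nabla_{G_{k,i}} f$ and $\nabla_{\mathcal{X}} f$ is Lipschitz on any bounded set containing the iterates, with constants $L_{k,i}$ and (absorbing) a comparable constant. Applying these bounds term by term, using that the argument difference between $\mathcal{M}^{(s+1)}$ and each mixed iterate is controlled by $\|\mathcal{M}^{(s+1)}-\mathcal{M}^{(s)}\|_F$, and summing the triangle inequality over the $N(N-1)+1$ blocks yields
\begin{equation*}
\|\mathcal{P}^{(s+1)}\|_F \le \Bigl(N(N-1)\rho + \sum_{k=1}^{N}\sum_{i\ne k}^{N} L_{k,i}\Bigr)\,\|\mathcal{M}^{(s+1)}-\mathcal{M}^{(s)}\|_F,
\end{equation*}
as claimed. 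The main obstacle is the bookkeeping: carefully expanding the subgradient on each block so that the mismatch between the evaluation point of the subproblem and $\mathcal{M}^{(s+1)}$ is absorbed cleanly by the block-Lipschitz constants without double-counting, and justifying that boundedness of $\{\mathcal{M}^{(s)}\}$ (hence of $\{\mathbf{G}^{(s+1)}\}$ appearing in mixed arguments) makes the polynomial partial gradients Lipschitz with the claimed constants on the relevant compact set. Once this inequality is established, combining it with Lemmas 1 and 2 and the finite-length argument of Attouch--Bolte--Svaiter delivers the global convergence of $\{\mathcal{M}^{(s)}\}$ to a critical point $(\mathbf{G}^{*},\mathcal{X}^{*})$ of $F$, completing the proof of Theorem 1.
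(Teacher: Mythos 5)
Your proposal is correct and follows essentially the same route as the paper's proof: first-order optimality of each block subproblem, construction of the subgradient element $\mathcal{P}^{(s+1)}$, block-Lipschitz continuity of the partial gradients of $f$ on the bounded set containing the iterates, and a triangle-inequality sum over the $N(N-1)+1$ blocks yielding $L_F = N(N-1)\rho + \sum_{k,i} L_{k,i}$. If anything, your handling of the $\mathcal{X}$-block is slightly more careful than the paper's, since you explicitly carry the normal-cone element $\mathbf{U}^{(s+1)}\in\partial h(\mathcal{X}^{(s+1)})$ through the stationarity condition rather than tacitly identifying $\partial_{\mathcal{X}}F$ with $\partial_{\mathcal{X}}f$.
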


    \begin{proof}
        According to the proof of Lemma \ref{lemma:Sufficient}, $\textbf{G}^{(s+1)}$ and $\mathcal{X}^{(s+1)}$ are the minimum solutions, thus we have $h(\mathcal{X}^{(s+1)}) \equiv 0$ for avoiding $F(\mathcal{M}) \rightarrow 0$. According to the fact that minimum solutions must satisfy the first-order optimal conditions, i.e., the sub-gradient equations of the objective function, then for all $s \in \mathbb{N}$, we always have 
        \begin{equation}\label{eq:first-order}
            \left\{
            \begin{aligned}
                & 0 \in \partial_{\textbf{G}_{k,i}} f\left(\mathbf{G}_{<k,i}^{(s+1)},\mathbf{G}_{k,i}^{(s+1)},\mathbf{G}_{>k,i}^{(s)},\mathcal{X}^{(s)}\right) + \rho (\textbf{G}_{k,i}^{(s+1)} - \textbf{G}_{k,i}^{(s)}),\\
               & 0 \in \partial_{\mathcal{X}} f(\textbf{G}^{(s+1)}, \mathcal{X}^{(s+1)}) + \rho (\mathcal{X}^{(s+1)} - \mathcal{X}^{(s)}).
            \end{aligned}
            \right.
        \end{equation}

        Based on the sub-differentiability property, we have
        \begin{equation}
            \partial F(\mathcal{M}^{(s+1)}) = \left(\partial_{\textbf{G}} F(\textbf{G}^{(s+1)}, \mathcal{X}^{(s+1)}), \partial_{\mathcal{X}} F(\textbf{G}^{(s+1)}, \mathcal{X}^{(s+1)})\right),
        \end{equation}
        where
        \begin{equation}
            \left\{
            \begin{aligned}
                & \partial_{\textbf{G}} F(\textbf{G}^{(s+1)}, \mathcal{X}^{(s+1)}) = \partial_{\textbf{G}} f(\textbf{G}^{(s+1)}, \mathcal{X}^{(s+1)}),\\
                & \partial_{\mathcal{X}} F(\textbf{G}^{(s+1)}, \mathcal{X}^{(s+1)}) = \partial_{\mathcal{X}} f(\textbf{G}^{(s+1)}, \mathcal{X}^{(s+1)}).
            \end{aligned}
            \right.
        \end{equation}
        Thus, we have the triangle inequality as follows,
        \begin{equation}
            \begin{aligned}
                \left\|\partial F(\mathcal{M}^{(s+1)})\right\|_{F}  \leq \sum_{k=1}^{N} \sum_{i \neq k}^{N} \left\|\partial_{\textbf{G}_{k,i}} f(\textbf{G}^{(s+1)}, \mathcal{X}^{(s+1)}) \right\|_{F} + \left\|\partial_{\mathcal{X}} f(\textbf{G}^{(s+1)}, \mathcal{X}^{(s+1)})\right\|_{F}
            \end{aligned}
        \end{equation}
        
        Substitute into the first-order optimal condition (\ref{eq:first-order}), leading to
        \begin{equation}
            \begin{aligned}
                & \left\|\partial F(\mathcal{M}^{(s+1)})\right\|_{F} \leq \sum_{k=1}^{N} \sum_{i \neq k}^{N} \left\|\partial_{\textbf{G}_{k,i}} f(\textbf{G}^{(s+1)}, \mathcal{X}^{(s+1)}) - \rho (\textbf{G}_{k,i}^{(s+1)} - \textbf{G}_{k,i}^{(s)}) \right.\\
                & \left.- \partial_{\textbf{G}_{k,i}}f\left(\mathbf{G}_{<k,i}^{(s+1)},\mathbf{G}_{k,i}^{(s+1)},\mathbf{G}_{>k,i}^{(s)},\mathcal{X}^{(s)}\right)\right\|_{F}+\rho \left\|\mathcal{X}^{(s+1)} -  \mathcal{X}^{(s)}\right\|_{F}\\
                & \leq \sum_{k=1}^{N} \sum_{i \neq k}^{N} \left\|\partial_{\textbf{G}_{k,i}} f(\textbf{G}^{(s+1)}, \mathcal{X}^{(s+1)})  - \partial_{\textbf{G}_{k,i}}f\left(\mathbf{G}_{<k,i}^{(s+1)},\mathbf{G}_{k,i}^{(s+1)},\mathbf{G}_{>k,i}^{(s)},\mathcal{X}^{(s)}\right)\right\|_{F} \\
                & + \sum_{k=1}^{N} \sum_{i \neq k}^{N}  \rho \left\|\textbf{G}_{k,i}^{(s+1)} - \textbf{G}_{k,i}^{(s)} \right\|_{F} +\rho \left\|\mathcal{X}^{(s+1)} -  \mathcal{X}^{(s)}\right\|_{F}. 
            \end{aligned}
        \end{equation}
        
        Since $f \in C^{1}$ and $h \equiv 0$, the partial derivatives $\left\{\partial_{\textbf{G}_{k,i}}F(\mathcal{M})\right\}$ is $\left\{L_{k,i}\right\}_{k\in [N], i \in [N] \setminus k}$-Lipschitz continuous. Then, we easily have
        \begin{equation}
            \begin{aligned}
                & \left\|\partial_{\textbf{G}_{k,i}} f(\textbf{G}^{(s+1)}, \mathcal{X}^{(s+1)}) - \partial_{\textbf{G}_{k,i}}f\left(\mathbf{G}_{<k,i}^{(s+1)},\mathbf{G}_{k,i}^{(s+1)},\mathbf{G}_{>k,i}^{(s)},\mathcal{X}^{(s)}\right)\right\|_{F}\\
                & \leq L_{k,i} \left\|\left\{\textbf{G}_{>k,i}^{(s+1)} - \textbf{G}_{>k,i}^{(s)}\right\}, \mathcal{X}^{(s+1)} - \mathcal{X}^{(s)}\right\|_{F},
            \end{aligned}
        \end{equation}
        Thus, a backsubstitution yields
        \begin{equation}
            \begin{aligned}
                \left\|\partial F(\mathcal{M}^{(s+1)})\right\|_{F} & \leq \sum_{k=1}^{N} \sum_{i \neq k}^{N} L_{k,i} \left\|\left\{\textbf{G}_{>k,i}^{(s+1)} - \textbf{G}_{>k,i}^{(s)}\right\}, \mathcal{X}^{(s+1)} - \mathcal{X}^{(s)}\right\|_{F}\\
                & + \sum_{k=1}^{N} \sum_{i \neq k}^{N}  \rho \left\|\textbf{G}_{k,i}^{(s+1)} - \textbf{G}_{k,i}^{(s)} \right\|_{F} +\rho \left\|\mathcal{X}^{(s+1)} -  \mathcal{X}^{(s)}\right\|_{F}\\
                & \leq \left(\sum_{k=1}^{N} \sum_{i \neq k}^{N} L_{k,i} + N(N-1) \right) \left\|\mathcal{M}^{(s+1)} - \mathcal{M}^{(s)}\right\|_{F} \\
                & = L_{F} \left\| \mathcal{M}^{(s+1)} - \mathcal{M}^{(s)}\right\|_{F},
            \end{aligned}
        \end{equation}
        where $L_{F} = N(N-1)\rho + \sum_{k=1}^{N} \sum_{i \neq k}^{N} L_{k,i}$. Therefore, there exists $\mathcal{P}^{(s+1)} \in \partial F(\mathcal{M}^{(s+1)})$ such that 
        \begin{equation}
            \left\|\mathcal{P}^{(s+1)}\right\|_{F} \leq L_{F} \left\| \mathcal{M}^{(s+1)} - \mathcal{M}^{(s)}\right\|_{F}.
        \end{equation}
        The relative error condition is proved.
    \end{proof}

\clearpage
{
\bibliographystyle{plain}
\bibliography{main}

@ARTICLE{tucker1966some,
  title={Some mathematical notes on three-mode factor analysis},
  author={Tucker, Ledyard R},
  journal={Psychometrika},
  volume={31},
  number={3},
  pages={279--311},
  year={1966},
  publisher={Springer}
}

@article{tuckerHitchcock1927,
author = {Hitchcock, Frank L.},
title = {The expression of a tensor or a polyadic as a sum of products},
journal = {Journal of Mathematics and Physics},
volume = {6},
number = {1-4},
pages = {164-189},
year = {1927}
}

@article{tuckerzhang2025noisy,
  title={Noisy Non-Negative Tucker Decomposition With Sparse Factors and Missing Data},
  author={Zhang, Xiongjun and Ng, Michael K},
  journal={Numerical Linear Algebra with Applications},
  volume={32},
  number={1},
  pages={e70009},
  year={2025},
  publisher={Wiley Online Library}
}

@article{TT,
  title={Tensor-train decomposition},
  author={Oseledets, Ivan V},
  journal={SIAM Journal on Scientific Computing},
  volume={33},
  number={5},
  pages={2295--2317},
  year={2011},
  publisher={SIAM}
}

@article{TR,
  title={Tensor ring decomposition},
  author={Zhao, Qibin and Zhou, Guoxu and Xie, Shengli and Zhang, Liqing and Cichocki, Andrzej},
  journal={arXiv preprint arXiv:1606.05535},
  year={2016}
}

@inproceedings{FCTN,
  title={Fully-connected tensor network decomposition and its application to higher-order tensor completion},
  author={Zheng, Yu-Bang and Huang, Ting-Zhu and Zhao, Xi-Le and Zhao, Qibin and Jiang, Tai-Xiang},
  booktitle={Proceedings of the AAAI conference on artificial intelligence},
  volume={35},
  number={12},
  pages={11071--11078},
  year={2021}
}

@article{FCTNrandom,
  title={A random sampling algorithm for fully-connected tensor network decomposition with applications},
  author={Wang, Mengyu and Cui, Honghua and Li, Hanyu},
  journal={Computational and Applied Mathematics},
  volume={43},
  number={4},
  pages={226},
  year={2024},
  publisher={Springer}
}

@article{FCTNSGD,
  title={Provable stochastic algorithm for large-scale fully-connected tensor network decomposition},
  author={Zheng, Wen-Jie and Zhao, Xi-Le and Zheng, Yu-Bang and Huang, Ting-Zhu},
  journal={Journal of Scientific Computing},
  volume={98},
  number={1},
  pages={16},
  year={2024},
  publisher={Springer}
}

@article{FCTNDEEP,
  title={Deep fully-connected tensor network decomposition for multi-dimensional signal recovery},
  author={Su, Ruoyang and Zhao, Xi-Le and Wu, Wei-Hao and Liu, Sheng and He, Junhua},
  journal={Signal Processing},
  volume={233},
  pages={109903},
  year={2025},
  publisher={Elsevier}
}

@article{FCTNROBUST,
title = {Fully-connected tensor network decomposition for robust tensor completion problem},
journal = {Inverse Problems and Imaging},
volume = {18},
number = {1},
pages = {208-238},
year = {2024},
issn = {1930-8337},
author = {Yun-Yang Liu and Xi-Le Zhao and Guang-Jing Song and Yu-Bang Zheng and Michael K. Ng and Ting-Zhu Huang}
}

@article{FCTNLRS,
  title={Low-rank sparse fully-connected tensor network for tensor completion},
  author={Yu, Jinshi and Li, Zhifu and Ma, Ge and Wang, Jingwen and Zou, Tao and Zhou, Guoxu},
  journal={Pattern Recognition},
  volume={158},
  pages={111000},
  year={2025},
  publisher={Elsevier}
}

@article{FCTNstrural,
  title={Efficient {FCTN} decomposition with structural sparsity for noisy tensor completion},
  author={Huang, Wei-Jian and Huang, Li and Jiang, Tai-Xiang and Zheng, Yu-Bang and Liu, Guisong},
  journal={IEEE Transactions on Big Data},
  year={2025},
  publisher={IEEE}
}

@article{FCTNNONLOCAL,
  title={Nonlocal patch-based fully connected tensor network decomposition for multispectral image inpainting},
  author={Zheng, Wen-Jie and Zhao, Xi-Le and Zheng, Yu-Bang and Pang, Zhi-Feng},
  journal={IEEE Geoscience and Remote Sensing Letters},
  volume={19},
  pages={1--5},
  year={2021},
  publisher={IEEE}
}

@article{TFCTN,
  title={Multi-dimensional image recovery via fully-connected tensor network decomposition under the learnable transforms},
  author={Lyu, Cheng-Yao and Zhao, Xi-Le and Li, Ben-Zheng and Zhang, Hao and Huang, Ting-Zhu},
  journal={Journal of Scientific Computing},
  volume={93},
  number={2},
  pages={49},
  year={2022},
  publisher={Springer}
}

@article{NFCTN,
  title={Nested fully-connected tensor network decomposition for multi-dimensional visual data recovery},
  author={Han, Zhi-Long and Huang, Ting-Zhu and Zhao, Xi-Le and Zhang, Hao and Wu, Wei-Hao},
  journal={IEEE Transactions on Circuits and Systems for Video Technology},
  volume={34},
  number={10},
  pages={10092--10106},
  year={2024},
  publisher={IEEE}
}

@inproceedings{SVDinsFCTN,
  title={{SVDinsTN}: A tensor network paradigm for efficient structure search from regularized modeling perspective},
  author={Zheng, Yu-Bang and Zhao, Xi-Le and Zeng, Junhua and Li, Chao and Zhao, Qibin and Li, Heng-Chao and Huang, Ting-Zhu},
  booktitle={Proceedings of the IEEE/CVF conference on computer vision and pattern recognition},
  pages={26254--26263},
  year={2024}
}

@inproceedings{TW,
 author = {Wu, Zhong-Cheng and Huang, Ting-Zhu and Deng, Liang-Jian and Dou, Hong-Xia and Meng, Deyu},
 booktitle = {Advances in Neural Information Processing Systems},
 editor = {S. Koyejo and S. Mohamed and A. Agarwal and D. Belgrave and K. Cho and A. Oh},
 pages = {27008--27020},
 publisher = {Curran Associates, Inc.},
 title = {Tensor wheel decomposition and its tensor completion application},
 volume = {35},
 year = {2022}
}

@book{TensorDecompositions, 
place={Cambridge}, 
title={Tensor decompositions for data science}, 
publisher={Cambridge University Press}, 
author={Ballard, Grey and Kolda, Tamara G.}, 
year={2025}
}

@article{sg1,
author = {Cichocki, Andrzej and Mandic, Danilo and Phan, Anh-Huy and Caiafa, Cesar and Zhou, Guoxu and Zhao, Qibin and Lathauwer, Lieven},
year = {2014},
month = {03},
pages = {},
title = {Tensor decompositions for signal processing applications from two-way to multiway component Analysis},
volume = {32},
journal = {Signal Processing Magazine, IEEE}
}

@inbook{sg2,
author = {Cichocki, Andrzej and Zdunek, Rafal and Phan, Anh-Huy and Amari, Shun-ichi},
publisher = {John Wiley \& Sons, Ltd},
isbn = {9780470747278},
title = {Nonnegative matrix and tensor factorizations},
chapter = {7},
pages = {337-432},
year = {2009}
}

@inproceedings{ml1,
  title={Wide compression: tensor ring nets},
  author={Wang, Wenqi and Sun, Yifan and Eriksson, Brian and Wang, Wenlin and Aggarwal, Vaneet},
  booktitle={Proceedings of the IEEE conference on computer vision and pattern recognition},
  pages={9329--9338},
  year={2018}
}

@article{ml2,
  title={Tensor decomposition for signal processing and machine learning},
  author={Sidiropoulos, Nicholas D and De Lathauwer, Lieven and Fu, Xiao and Huang, Kejun and Papalexakis, Evangelos E and Faloutsos, Christos},
  journal={IEEE Transactions on signal processing},
  volume={65},
  number={13},
  pages={3551--3582},
  year={2017},
  publisher={IEEE}
}

@article{ml3,
  title={Tensor decompositions for learning latent variable models},
  author={Anandkumar, Animashree and Ge, Rong and Hsu, Daniel J and Kakade, Sham M and Telgarsky, Matus and others},
  journal={Journal of Machine Learning Research},
  volume={15},
  number={1},
  pages={2773--2832},
  year={2014}
}

@article{kolda2009tensor,
  title={Tensor decompositions and applications},
  author={Kolda, Tamara G and Bader, Brett W},
  journal={SIAM Review},
  volume={51},
  number={3},
  pages={455--500},
  year={2009},
  publisher={SIAM}
}

@article{revealFCTN,
title = {Rank-revealing fully-connected tensor network decomposition and its application to tensor completion},
author = {Yun-Yang Liu and Xi-Le Zhao and Gemine Vivone},
journal = {Pattern Recognition},
volume = {165},
pages = {111610},
year = {2025},
issn = {0031-3203}
}

@inproceedings{TNSbetter,
 author = {Khavari, Behnoush and Rabusseau, Guillaume},
 booktitle = {Advances in Neural Information Processing Systems},
 editor = {M. Ranzato and A. Beygelzimer and Y. Dauphin and P.S. Liang and J. Wortman Vaughan},
 pages = {10931--10943},
 publisher = {Curran Associates, Inc.},
 title = {Lower and upper bounds on the pseudo-dimension of tensor network models},
 volume = {34},
 year = {2021}
}

@ARTICLE{TNSbetter2,
  author={Bengua, Johann A. and Phien, Ho N. and Tuan, Hoang Duong and Do, Minh N.},
  journal={IEEE Transactions on Image Processing}, 
  title={Efficient tensor completion for color image and video recovery: low-Rank tensor train}, 
  year={2017},
  volume={26},
  number={5},
  pages={2466-2479}
}

@ARTICLE{HaLRTC,
  author={Liu, Ji and Musialski, Przemyslaw and Wonka, Peter and Ye, Jieping},
  journal={IEEE Transactions on Pattern Analysis and Machine Intelligence}, 
  title={Tensor completion for estimating missing values in visual data}, 
  year={2013},
  volume={35},
  number={1},
  pages={208-220}
}

@ARTICLE{TFTC,
  author={Zhou, Pan and Lu, Canyi and Lin, Zhouchen and Zhang, Chao},
  journal={IEEE Transactions on Image Processing}, 
  title={Tensor factorization for low-Rank tensor Completion}, 
  year={2018},
  volume={27},
  number={3},
  pages={1152-1163}
}

@article{TNN,
  title={Tensor robust principal component analysis with a new tensor nuclear norm},
  author={Lu, Canyi and Feng, Jiashi and Chen, Yudong and Liu, Wei and Lin, Zhouchen and Yan, Shuicheng},
  journal={IEEE transactions on pattern analysis and machine intelligence},
  volume={42},
  number={4},
  pages={925--938},
  year={2019},
  publisher={IEEE}
}

@article{TMac,
title = {Parallel matrix factorization for low-rank tensor completion},
journal = {Inverse Problems and Imaging},
volume = {9},
number = {2},
pages = {601-624},
year = {2015},
issn = {1930-8337},
author = {Yangyang Xu and Ruru Hao and Wotao Yin and Zhixun Su}
}

@article{PSNRSSIM,
  title={Image quality assessment: from error visibility to structural similarity},
  author={Wang, Zhou and Bovik, Alan C and Sheikh, Hamid R and Simoncelli, Eero P},
  journal={IEEE transactions on image processing},
  volume={13},
  number={4},
  pages={600--612},
  year={2004},
  publisher={IEEE}
}

@article{kilmer2013third,
  title={Third-order tensors as operators on matrices},
  author={Kilmer, Misha E and Braman, Karen and Hao, Ning and Hoover, Robert C},
  journal={SIAM Journal on Matrix Analysis and Applications},
  volume={34},
  number={1},
  pages={148--172},
  year={2013},
  publisher={SIAM}
}

@article{attouch2013convergence,
  title={Convergence of descent methods for semi-algebraic and tame problems: proximal algorithms, forward--backward splitting, and regularized Gauss--Seidel methods},
  author={Attouch, Hedy and Bolte, J{\'e}r{\^o}me and Svaiter, Benar Fux},
  journal={Mathematical programming},
  volume={137},
  number={1},
  pages={91--129},
  year={2013},
  publisher={Springer}
}

@article{cpkiers2000towards,
  title={Towards a standardized notation and terminology in multiway analysis},
  author={Kiers, Henk AL},
  journal={Journal of Chemometrics: A Journal of the Chemometrics Society},
  volume={14},
  number={3},
  pages={105--122},
  year={2000},
  publisher={Wiley Online Library}
}

@inproceedings{tensorwang2024tensorized,
  title={Tensorized hypergraph neural networks},
  author={Wang, Maolin and Zhen, Yaoming and Pan, Yu and Zhao, Yao and Zhuang, Chenyi and Xu, Zenglin and Guo, Ruocheng and Zhao, Xiangyu},
  booktitle={Proceedings of the 2024 SIAM International Conference on Data Mining (SDM)},
  pages={127--135},
  year={2024},
  organization={SIAM}
}

@article{kilmer2011factorization,
  title={Factorization strategies for third-order tensors},
  author={Kilmer, Misha E and Martin, Carla D},
  journal={Linear Algebra and its Applications},
  volume={435},
  number={3},
  pages={641--658},
  year={2011},
  publisher={Elsevier}
}

@article{MSI1,
  title={Non-local meets global: An iterative paradigm for hyperspectral image restoration},
  author={He, Wei and Yao, Quanming and Li, Chao and Yokoya, Naoto and Zhao, Qibin and Zhang, Hongyan and Zhang, Liangpei},
  journal={IEEE Transactions on Pattern Analysis and Machine Intelligence},
  volume={44},
  number={4},
  pages={2089--2107},
  year={2020},
  publisher={IEEE}
}

@article{MSI2,
  title={Interpretable hyperspectral artificial intelligence: When nonconvex modeling meets hyperspectral remote sensing},
  author={Hong, Danfeng and He, Wei and Yokoya, Naoto and Yao, Jing and Gao, Lianru and Zhang, Liangpei and Chanussot, Jocelyn and Zhu, Xiaoxiang},
  journal={IEEE Geoscience and Remote Sensing Magazine},
  volume={9},
  number={2},
  pages={52--87},
  year={2021},
  publisher={IEEE}
}

@ARTICLE{MSI3,
  author={Luo, Yi-Si and Zhao, Xi-Le and Jiang, Tai-Xiang and Chang, Yi and Ng, Michael K. and Li, Chao},
  journal={IEEE Transactions on Image Processing}, 
  title={Self-supervised nonlinear transform-based tensor nuclear norm for multi-dimensional image recovery}, 
  year={2022},
  volume={31},
  number={},
  pages={3793-3808}}

@article{MSI4,
  title={Low-rank tensor recovery via non-convex regularization, structured factorization and spatio-temporal characteristics},
  author={Yu, Quan and Yang, Ming},
  journal={Pattern Recognition},
  volume={137},
  pages={109343},
  year={2023},
  publisher={Elsevier}
}

@article{traffic1,
  title={A Bayesian tensor decomposition approach for spatiotemporal traffic data imputation},
  author={Chen, Xinyu and He, Zhaocheng and Sun, Lijun},
  journal={Transportation research part C: emerging technologies},
  volume={98},
  pages={73--84},
  year={2019},
  publisher={Elsevier}
}

@article{traffic2,
  title={Low-rank autoregressive tensor completion for spatiotemporal traffic data imputation},
  author={Chen, Xinyu and Lei, Mengying and Saunier, Nicolas and Sun, Lijun},
  journal={IEEE Transactions on Intelligent Transportation Systems},
  volume={23},
  number={8},
  pages={12301--12310},
  year={2021},
  publisher={IEEE}
}

@article{traffic3,
  title={A nonconvex low-rank tensor completion model for spatiotemporal traffic data imputation},
  author={Chen, Xinyu and Yang, Jinming and Sun, Lijun},
  journal={Transportation Research Part C: Emerging Technologies},
  volume={117},
  pages={102673},
  year={2020},
  publisher={Elsevier}
}

@incollection{TNs1,
  title={Tensor networks for dimensionality reduction, big data and deep learning},
  author={Cichocki, Andrzej},
  booktitle={Advances in Data Analysis with Computational Intelligence Methods: Dedicated to Professor Jacek {\.Z}urada},
  pages={3--49},
  year={2017},
  publisher={Springer}
}

@article{TNs2,
  title={Tensor networks for dimensionality reduction and large-scale optimization part 1 low-rank tensor decompositions},
  author={Cichocki, Andrzej and Lee, Namgil and Oseledets, Ivan and Phan, Anh-Huy and Zhao, Qibin and Mandic, Danilo P},
  journal={Foundations and Trends in Machine Learning},
  volume={9},
  number={4-5},
  pages={249--429},
  year={2016},
  publisher={Emerald Publishing limited}
}

@inproceedings{TNs3,
  title={Concatenated tensor networks for deep multi-task learning},
  author={Wang, Maolin and Su, Zeyong and Luo, Xu and Pan, Yu and Zheng, Shenggen and Xu, Zenglin},
  booktitle={International Conference on Neural Information Processing},
  pages={517--525},
  year={2020},
  organization={Springer}
}

@article{TNSbetter3,
  title={Active subspace of neural networks: Structural analysis and universal attacks},
  author={Cui, Chunfeng and Zhang, Kaiqi and Daulbaev, Talgat and Gusak, Julia and Oseledets, Ivan and Zhang, Zheng},
  journal={SIAM Journal on Mathematics of Data Science},
  volume={2},
  number={4},
  pages={1096--1122},
  year={2020},
  publisher={SIAM}
}

@article{KL,
  title={Proximal alternating linearized minimization for nonconvex and nonsmooth problems},
  author={Bolte, J{\'e}r{\^o}me and Sabach, Shoham and Teboulle, Marc},
  journal={Mathematical Programming},
  volume={146},
  number={1},
  pages={459--494},
  year={2014},
  publisher={Springer}
}

@article{qiu2024balanced,
  title={Balanced unfolding induced tensor nuclear norms for high-order tensor completion},
  author={Qiu, Yuning and Zhou, Guoxu and Wang, Andong and Zhao, Qibin and Xie, Shengli},
  journal={IEEE Transactions on Neural Networks and Learning Systems},
  volume={36},
  number={3},
  pages={4724--4737},
  year={2024},
  publisher={IEEE}
}

@inproceedings{muchmore1,
  title={Unified tensor framework for incomplete multi-view clustering and missing-view inferring},
  author={Wen, Jie and Zhang, Zheng and Zhang, Zhao and Zhu, Lei and Fei, Lunke and Zhang, Bob and Xu, Yong},
  booktitle={Proceedings of the AAAI conference on artificial intelligence},
  volume={35},
  number={11},
  pages={10273--10281},
  year={2021}
}

@inproceedings{muchmore2,
  title={Higher-order nonnegative CANDECOMP/PARAFAC tensor decomposition using proximal algorithm},
  author={Wang, Deqing and Cong, Fengyu and Ristaniemi, Tapani},
  booktitle={ICASSP 2019-2019 IEEE International Conference on Acoustics, Speech and Signal Processing (ICASSP)},
  pages={3457--3461},
  year={2019},
  organization={IEEE}
}

@article{wang2026support,
  title={Support fully-connected tensor networks machine and Its safe screening rule},
  author={Wang, Hongmei and Jiang, Kun and Li, Xiao and He, Haoyu and Xu, Yitian},
  journal={Expert Systems with Applications},
  pages={131865},
  year={2026},
  publisher={Elsevier}
}

@article{bai,
author = {Bai, Minru and Zhang, Xiongjun and Ni, Guyan and Cui, Chunfeng},
title = {An Adaptive Correction Approach for Tensor Completion},
journal = {SIAM Journal on Imaging Sciences},
volume = {9},
number = {3},
pages = {1298-1323},
year = {2016}
}
}

\end{document}